\pgfplotsset{compat=1.5}
\newtheorem{theorem}{Theorem}
\newtheorem{proposition}{Proposition}[section]
\newtheorem{corollary}[proposition]{Corollary}
\theoremstyle{remark}
\newtheorem*{remark}{Remark}
\theoremstyle{definition}
\newtheorem*{definition}{Definition}
\newif\ifdetails
\newcommand{\TODO}[1]%
{\par\fbox{\begin{minipage}{0.9\linewidth}\textbf{TODO:} #1\end{minipage}}\par}
\newcommand{\details}[1]%
{\ifdetails\par\fbox{\begin{minipage}{0.9\linewidth}\textit{Detail:}
      #1\end{minipage}}\par\fi}
\newcommand{\Z}[0]{\mathbb{Z}}
\renewcommand{\P}[0]{\mathbb{P}}
\newcommand{\E}[0]{\mathbb{E}}
\newcommand{\V}[0]{\mathbb{V}}
\DeclarePairedDelimiter{\abs}{\lvert}{\rvert}
\newcommand{\innernode}{\tikz{\node[draw, circle, inner sep=2.5pt] {};}}
\title[Growing and Destroying Catalan--Stanley Trees]{Growing and Destroying Catalan--Stanley Trees}
\author{Benjamin Hackl\affiliationmark{1} \and Helmut Prodinger\affiliationmark{2}
\footnotetext{B.~Hackl is supported by the Austrian
  Science Fund (FWF): P~24644-N26 and by the Karl Popper Kolleg
  ``Modeling-Simulation-Optimization'' funded by the Alpen-Adria-Universit\"at Klagenfurt
  and by the Carinthian Economic Promotion Fund (KWF). This paper has been written while
  he was a visitor at Stellenbosch University.}
\footnotetext{H.~Prodinger is supported by an incentive grant of the
  National Research Foundation of South Africa.}
\footnotetext{Email-addresses:
  \href{mailto:benjamin.hackl@aau.at}{benjamin.hackl@aau.at},
  \href{mailto:hproding@sun.ac.za}{hproding@sun.ac.za}}}
\affiliation{
  Institut f\"ur Mathematik, Alpen-Adria-Universit\"at Klagenfurt, Austria\\
  Department of Mathematical Sciences, Stellenbosch University, South Africa
}
\keywords{Planar trees, tree reductions, asymptotic analysis}
\begin{document}
\publicationdetails{20}{2018}{1}{11}{3964}
\maketitle

\begin{abstract}
  Stanley lists the class of Dyck paths where all returns to the axis are of odd length as
  one of the many objects enumerated by (shifted) Catalan numbers. By the standard
  bijection in this context, these special Dyck paths correspond to a class of rooted
  plane trees, so-called Catalan--Stanley trees.

  This paper investigates a deterministic growth procedure for these trees by which any
  Catalan--Stanley tree can be grown from the tree of size one after some number of rounds;
  a parameter that will be referred to as the age of the tree. Asymptotic analyses are
  carried out for the age of a random Catalan--Stanley tree of given size as well as for
  the ``speed'' of the growth process by comparing the size of a given tree to the size of
  its ancestors.
\end{abstract}

\section{Introduction}

It is well-known that the $n$th Catalan number $C_{n} = \frac{1}{n+1} \binom{2n}{n}$
enumerates Dyck paths of length $2n$. In~\cite{Stanley:2015:catalan}, Stanley lists a
variety of other combinatorial interpretations of the Catalan numbers, one of them being
the number of Dyck paths from $(0,0)$ to $(2n+2,0)$ such that any maximal sequence of
consecutive $(1, -1)$ steps ending on the $x$-axis has odd length. At this point it is
interesting to note that there are more subclasses of Dyck paths, also enumerated by
Catalan numbers, that are defined via parity restrictions on the length of the returns to
the $x$-axis as well (see, e.g., \cite{Callan:2005:136th-catalan}). The height of the class of Dyck paths
with odd-length returns to the origin has already been studied
in~\cite{Prodinger:2012:dyck-path-parity} with the result that the main term of the height
is equal to the main term of the height of general Dyck paths as investigated in~\cite{Bruijn-Knuth-Rice:1972}.

\begin{figure}[ht]
  \centering
  \begin{tikzpicture}[thick, scale=0.5, baseline={([yshift=-0.3em]current bounding
      box.center)}]
    \draw[help lines] (0,0) grid (20.5,3.5);
    \draw[->] (-0.5, 0) -- (21, 0);
    \draw[->] (0, -0.5) -- (0, 4);
    \draw[very thick] (0,0) to (3,3) to (4,2) to (5,3) to (6,2) to (7,3) to (10,0) to
    (11,1) to (12,0) to (14,2) to (15,1) to (17,3) to (20,0);
    \node[draw, rectangle, fill] at (7,3) {};
    \node[draw, rectangle, fill] at (11,1) {};
    \node[draw, rectangle, fill] at (17,3) {};
  \end{tikzpicture}
  $\quad\triangleq$
  \begin{tikzpicture}[thick, scale=0.65, baseline={([yshift=-0.3em]current bounding
      box.center)}]
    \node[draw, circle] {}
    child {node[draw, circle] {}
      child {node[draw, circle] {}
        child {node[draw, circle] {}}
        child {node[draw, circle] {}}
        child {node[draw, rectangle, fill] {}}
      }
    }
    child {node[draw, rectangle, fill] {}}
    child {node[draw, circle] {}
      child {node[draw, circle] {}}
      child {node[draw, circle] {}
        child {node[draw, rectangle, fill] {}}
      }
    };
  \end{tikzpicture}
  \caption{Bijection between Dyck paths with odd returns to zero and Catalan--Stanley
    trees. $\blacksquare$ marks all peaks before a descent to the $x$-axis and all
    rightmost leaves in the branches attached to the root, respectively.}
  \label{fig:bijection}
\end{figure}
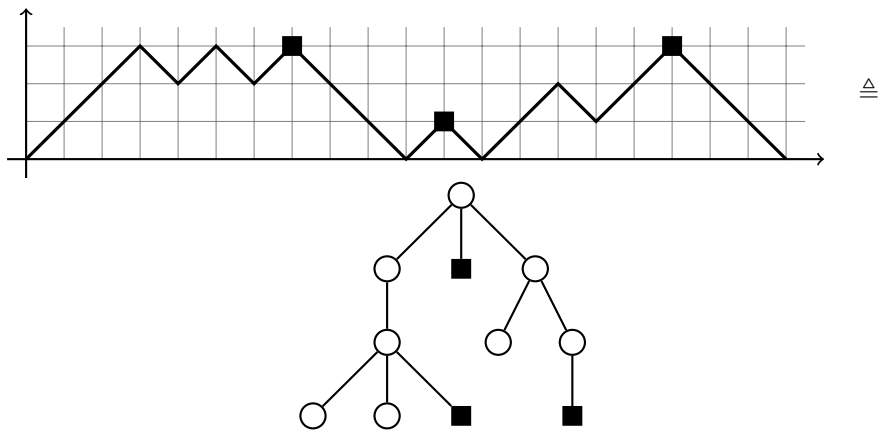

By the well-known glove bijection, this special class of Dyck paths corresponds to a special
class $\mathcal{S}$ of rooted plane trees, where the distance between the rightmost
node in all branches attached to the root and the root is odd. This bijection
is illustrated in Figure~\ref{fig:bijection}.

The trees in the combinatorial class $\mathcal{S}$ are the central object of study in this
paper.
\begin{definition}
  Let $\mathcal{S}$ be the combinatorial class of all rooted plane trees $\tau$, where the
  rightmost leaves in all branches attached to the root of $\tau$ have an odd
  distance to the root. In particular, $\innernode$ itself, i.e., the tree consisting of
  just the root belongs to $\mathcal{S}$ as well. We call the trees in $\mathcal{S}$
  \emph{Catalan--Stanley} trees.
\end{definition}

There are some recent approaches (see
\cite{Hackl-Heuberger-Kropf-Prodinger:ta:treereductions, Hackl-Heuberger-Prodinger:2018:register-reduction}) in which classical tree parameters like the register function
for binary trees are analyzed by, in a nutshell, finding a proper way to grow tree
families in a way that the parameter of interest corresponds to the age of the tree
within this (deterministic) growth process.

Following this idea, the aim of this paper is to define a ``natural'' growth process enabling us to
grow any Catalan--Stanley tree from $\innernode$, and then to analyze the corresponding
tree parameters.

In Section~\ref{sec:growing-trees} we define such a growth process and analyze some
properties of it. In particular, in Proposition~\ref{prop:family-expansion} we
characterize the family of trees that can be grown by applying a fixed number of
growth iterations to some given tree family. This is then used to derive generating functions
related to the parameters investigated in Sections~\ref{sec:age} and~\ref{sec:ancestors}.

Section~\ref{sec:age} contains an analysis of the age of Catalan--Stanley trees, asymptotic
expansions for the expected age among all trees of size $n$ and the corresponding variance
are given in Theorem~\ref{thm:age-analysis}.

Section~\ref{sec:ancestors} is devoted to the analysis of how fast trees of given size can
be grown by investigating the size of the $r$th ancestor tree compared to the size of the
original tree. This is characterized in Theorem~\ref{thm:ancestor-analysis}.

We use the open-source computer mathematics software
SageMath~\cite{SageMath:2017:7.6} with its included module for computing with asymptotic
expansions~\cite{Hackl-Heuberger-Krenn:2016:asy-sagemath} in order to carry out the
computationally heavy parts of this paper. The corresponding worksheet can be found at
\url{https://benjamin-hackl.at/publications/catalan-stanley/}.

\section{Growing Catalan--Stanley Trees}\label{sec:growing-trees}

We denote the combinatorial class of rooted plane trees with $\mathcal{T}$,
and the corresponding generating function enumerating these trees with respect
to their size by $T(z)$. For the sake of readability, we omit the argument of
$T(z) = T$ throughout this paper. By means of the symbolic
method~\cite[Chapter I]{Flajolet-Sedgewick:ta:analy}, the combinatorial
class $\mathcal{T}$ satisfies the construction $\mathcal{T} = \innernode \times
\operatorname{SEQ}(\mathcal{T})$. It translates into the functional equation
\begin{equation}\label{eq:simplify-T}
  T(z) = \frac{z}{1 - T(z)} \quad \iff \quad z + T(z)^{2} = T(z),
\end{equation}
which will be used throughout the paper. Additionally, it is easy to see by solving the
quadratic equation in~\eqref{eq:simplify-T} and choosing the correct branch of the
solution, we have the well-known formula $T(z) = \frac{1 - \sqrt{1 - 4z}}{2}$.

\begin{proposition}
  The generating function of the combinatorial class $\mathcal{S}$ of
  Catalan--Stanley trees, where $t$ marks all the rightmost nodes in the branches attached
  to the root of the tree and $z$ marks all other nodes, is given by
  \begin{equation}\label{eq:catalan-stanley-gf}
    S(z,t) = z + \frac{zt}{1 - t - T^{2}}.
  \end{equation}
  In particular, there is one Catalan--Stanley tree of size $1$ and $C_{n-2}$
  Catalan--Stanley trees of size $n$ for $n\geq 2$.
\end{proposition}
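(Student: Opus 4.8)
The plan is to decompose a Catalan--Stanley tree according to the branches hanging off its root and to track the parity of the length of the \emph{rightmost path} (the path from a node down to its rightmost leaf) inside each such branch. Concretely, I would write $\mathcal{S} = \innernode \;+\; \innernode \times \operatorname{SEQ}_{\geq 1}(\mathcal{B})$, where $\mathcal{B}$ is the class of rooted plane trees admissible as a branch at the root: since the edge joining the root of $\tau$ to the root of a branch adds $1$ to the distance, the rightmost leaf of that branch sits at odd distance from the root of $\tau$ precisely when the rightmost path of the branch itself has \emph{even} length. Let $B = B(z,t)$ be the generating function of $\mathcal{B}$ with the rightmost leaf of the branch marked by $t$ and every other node by $z$, and let $O = O(z,t)$ be the analogue for plane trees whose rightmost path has \emph{odd} length. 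Translating the construction gives $S = z + z\cdot\frac{B}{1-B}$, so the whole proposition reduces to establishing $B = \frac{t}{1-T^{2}}$.

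To compute $B$, I would peel off the root of a branch. A plane tree with even-length rightmost path is either a single node --- contributing the weight $t$, that node being its own rightmost leaf --- or a root followed by an ordered sequence of subtrees whose last member has an \emph{odd}-length rightmost path and whose earlier members are arbitrary plane trees. Recording that the root contributes $z$, the earlier subtrees a factor $\frac{1}{1-T}$ (a possibly empty sequence, all nodes marked by $z$), and the last subtree a factor $O$, and using $\frac{z}{1-T} = T$ from \eqref{eq:simplify-T}, this reads $B = t + T\cdot O$. The same peeling applied to odd-length rightmost paths forces at least one subtree and yields $O = T\cdot B$. Eliminating $O$ gives $B = t + T^{2}B$, hence $B = \frac{t}{1-T^{2}}$; substituting into $S = z + \frac{zB}{1-B}$ and clearing denominators produces exactly \eqref{eq:catalan-stanley-gf}. (Alternatively, one may expand a branch directly along its rightmost path of length $2k$, obtaining $B = t\sum_{k\geq 0}\big(\tfrac{z}{1-T}\big)^{2k} = \tfrac{t}{1-T^{2}}$.)

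For the enumeration I would specialise $t = z$, so that every node is counted alike. From \eqref{eq:simplify-T} we have $z = T - T^{2}$, i.e.\ $T^{2} = T - z$, whence $1 - t - T^{2} = 1 - z - (T - z) = 1 - T$, and therefore $S(z,z) = z + \frac{z^{2}}{1-T} = z + z\,T(z)$, again using $\frac{z}{1-T} = T$. Since $T(z) = \sum_{n\geq 1} C_{n-1} z^{n}$ enumerates rooted plane trees by number of nodes, the coefficient of $z^{n}$ in $z\,T(z)$ equals $C_{n-2}$ for $n\geq 2$ and vanishes for $n = 1$; adding the term $z$ coming from $\innernode$ gives one Catalan--Stanley tree of size $1$ and $C_{n-2}$ of them of size $n$ for $n\geq 2$, as claimed (note $C_{0}=1$, matching the unique tree on two nodes).

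I expect the only genuinely delicate point to be the parity bookkeeping in the first two paragraphs: keeping straight the off-by-one contributed by the edge above each branch, the resulting swap between "even" and "odd" rightmost paths, and the fact that a branch consisting of a single leaf is legitimate and has its unique node marked by $t$. Once the mutual recursion $B = t + TO$, $O = TB$ is in place, everything else is routine algebra driven entirely by the functional equation \eqref{eq:simplify-T}.
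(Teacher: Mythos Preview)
Your proposal is correct and follows essentially the same approach as the paper. The paper also writes $S = z + z\cdot\frac{B}{1-B}$ with $B = \frac{t}{1-T^{2}}$, obtaining $B$ directly by decomposing each branch along its rightmost path into a sequence of pairs of plane trees followed by the marked leaf---exactly the alternative you mention in parentheses; your mutual recursion $B = t + TO$, $O = TB$ is a harmless repackaging of that same decomposition, and the enumeration via $S(z,z) = z + zT$ is identical to the paper's.
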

\begin{figure}[ht]
  \centering
  $\mathcal{S} = $\quad
  \begin{tikzpicture}[thick, scale=0.64, baseline={([yshift=-0.3em]current bounding
      box.center)}]
    \node[draw, circle] {};
  \end{tikzpicture}
  $\quad + \quad$
  \begin{tikzpicture}[thick, scale=0.64, baseline={([yshift=-0.3em]current bounding
      box.center)}, level distance=25mm, level 1/.style={sibling distance=45mm},]
    \node[draw, circle] {}
    child {node {$\operatorname{SEQ}\bigg(\tikz[scale=0.6, level distance=15mm]{\node {$\mathcal{T}$} child
          {node {$\mathcal{T}$}};}\bigg)$}
      child {node[draw, rectangle, fill] {}}
    }
    child {node {$\operatorname{SEQ}\bigg(\tikz[scale=0.6, level distance=15mm]{\node {$\mathcal{T}$} child
          {node {$\mathcal{T}$}};}\bigg)$}
      child {node[draw, rectangle, fill] {}}
    }
    child[gray, dashed] {node {\dots}}
    child {node {$\operatorname{SEQ}\bigg(\tikz[scale=0.6, level distance=15mm]{\node {$\mathcal{T}$} child
          {node {$\mathcal{T}$}};}\bigg)$}
      child {node[draw, rectangle, fill] {}}
    };
    \draw [thick, decoration={brace,mirror,raise=0.5cm}, decorate] (-7.5,-5) -- (7.5,-5)
    node [pos=0.5,anchor=north,yshift=-0.55cm] {$\geq 1$ branches}; 
  \end{tikzpicture}
  \caption{Symbolic specification of the combinatorial class $\mathcal{S}$ of
    Catalan--Stanley trees. Nodes represented by $\blacksquare$ are marked by the
    variable $t$, all other nodes are marked by $z$.}
  \label{fig:catalan-stanley-symbolic}
\end{figure}
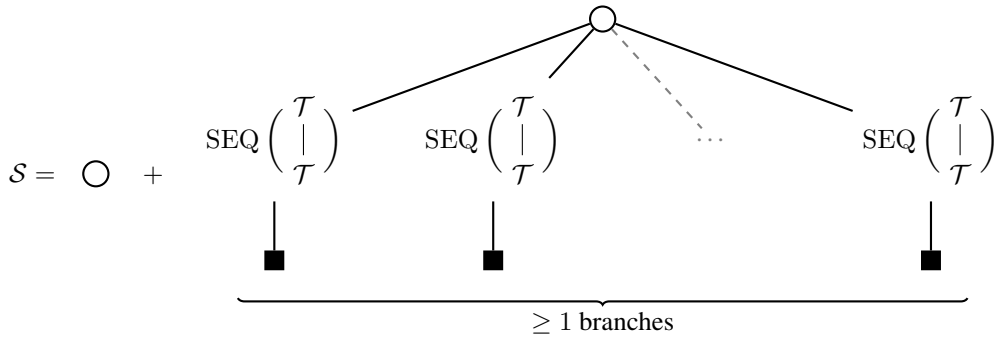
\begin{proof}
  By using the symbolic method~\cite[Chapter I]{Flajolet-Sedgewick:ta:analy}, the symbolic
  representation of $\mathcal{S}$ given in Figure~\ref{fig:catalan-stanley-symbolic}
  (which is based on the decomposition of the rightmost path in each subtree of the root
  into a sequence of pairs of rooted plane trees and the final rightmost leaf $\blacksquare$)
  translates into the functional equation
  \[ S(z,t) = z + \frac{z \frac{t}{1 - T^{2}}}{1 - \frac{t}{1 - T^{2}}},  \]
  which simplifies to the equation given in~\eqref{eq:catalan-stanley-gf}.

  In order to enumerate Catalan--Stanley trees with respect to
  their size, we consider $S(z,z)$, which simplifies to $z (T + 1)$ and thus proves
  the statement.
\end{proof}

We want to describe how to grow all Catalan--Stanley trees beginning from the tree that has
only one node, $\innernode$.

We consider the tree reduction $\rho \colon \mathcal{S} \to
\mathcal{S}$ that operates on a given Catalan--Stanley tree
$\tau$ (or just the root) as follows:

Start from all nodes that are represented by $t$, i.e.\ the rightmost
leaves in the branches attached to the root: if the node is a child of the root,
it is simply deleted. Otherwise we delete all subtrees of the grandparent of the
node and mark the resulting leaf, i.e., the former grandparent, with $t$.

\begin{figure}[ht]
  \centering
  \begin{tikzpicture}[thick, scale=0.65, baseline={([yshift=-0.3em]current bounding
      box.center)}]
    \node[draw, circle] {}
    child {
      node[draw, circle] {}
      child {
        node[draw, circle] {}
        child {
          node[draw, circle] {}
        }
      }
      child {
        node[draw, circle] {}
        child {
          node[draw, circle] {}
          child {node[draw, circle] {}
              child {node[draw, circle] {}}
              child {node[draw, circle] {}}
              child {node[draw, circle] {}}
          }
        }
        child{
          node[draw, rectangle, fill] {}
        }
      }
    }
    child {node[draw, rectangle, fill] {}}
    child {node[draw, circle] {}
      child{node[draw, circle] {}}
      child{node[draw, circle] {}
        child{node[draw, circle] {}
          child{node[draw, circle] {}
            child{node[draw, rectangle, fill] {}}
          }
        }
      }
    };
  \end{tikzpicture}
  $\quad\mapsto\quad$
  \begin{tikzpicture}[thick, scale=0.65, baseline={([yshift=-0.3em]current bounding
      box.center)}]
    \node[draw, circle] {}
    child {node[draw, rectangle, fill] {}}
    child {node[draw, circle] {}
      child {node[draw, circle] {}}
      child {node[draw, circle] {}
        child{node[draw, rectangle, fill] {}}
      }
    };
  \end{tikzpicture}
  $\quad\mapsto\quad$
  \begin{tikzpicture}[thick, scale=0.65, baseline={([yshift=-0.3em]current bounding
      box.center)}]
    \node[draw, circle] {} child {node[draw, rectangle, fill] {}};
  \end{tikzpicture}
  $\quad\mapsto\quad$
  \begin{tikzpicture}[thick, scale=0.65, baseline={([yshift=-0.3em]current bounding
      box.center)}]
    \node[draw, circle] {};
  \end{tikzpicture}
  \caption{Illustration of the reduction operator $\rho$, $\blacksquare$ marks the
    rightmost leaves in the branches attached to the root.}
  \label{fig:reduction-illustration}
\end{figure}
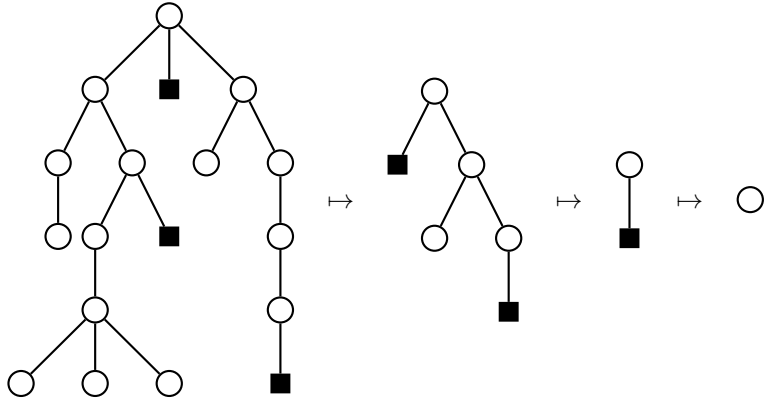

This tree reduction is illustrated in Figure~\ref{fig:reduction-illustration}. While the
reduction $\rho$ is certainly not injective as there are several trees with the same
reduction $\tau \in \mathcal{S}$, it is easy to construct a tree reducing to some given
$\tau\in \mathcal{S}$ by basically inserting chains of length $2$ before all rightmost
leaves in the branches attached to the root. This allows us to think of the operator
$\rho^{-1}$ mapping a given tree (or some family of trees) to the respective set of
preimages as a \emph{tree expansion operator}. In this context, we also want to define
the \emph{age} of a Catalan--Stanley tree.

\begin{definition}
  Let $\tau\in\mathcal{S}$ be a Catalan--Stanley tree. Then we define $\alpha(\tau)$, the
  \emph{age} of $\tau$, to be the number of expansions required to grow $\tau$ from the
  tree of size one, $\innernode$. In particular, we want
  \[ \alpha(\tau) = r \quad\iff\quad \tau\in (\rho^{-1})^{r}(\innernode) \text{ and }
    \tau\not\in (\rho^{-1})^{r-1}(\innernode)  \]
  for $r\in \Z_{\geq 1}$, and we set $\alpha(\innernode) = 0$.
\end{definition}
\begin{remark}
  Naturally, the concept of the age of a tree strongly depends on the underlying
  reduction procedure. In particular, for the reduction procedure considered in this
  article we have $\alpha(\tau) = r$ if and only if the maximum depth of the rightmost
  leaves in the branches attached to the root is $2r-1$.
\end{remark}

Before we delve into the analysis of the age of Catalan--Stanley trees, we need to be able
to translate the tree expansion given by $\rho^{-1}$ into a suitable form so that we can
actually use it in our analysis. The following proposition shows that $\rho^{-1}$ can be
expressed in the language of generating functions.

\begin{proposition}
  Let $\mathcal{F}\subseteq \mathcal{S}$ be a family of Catalan--Stanley trees
  with bivariate generating function $f(z,t)$, where $t$ marks rightmost
  leaves in the branches attached to the root and $z$ marks all other
  nodes. Then the generating function of $\rho^{-1}(\mathcal{F})$, the family of
  trees whose reduction is in $\mathcal{F}$, is given by
  \begin{equation}
    \label{eq:expansion-operator}
    \Phi(f(z,t)) = \frac{1}{1-t} f\Big(z, \frac{t}{1-t} T^{2}\Big).
  \end{equation}
\end{proposition}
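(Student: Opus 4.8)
The plan is to understand precisely what $\rho^{-1}$ does to a tree combinatorially, and then translate that action into generating functions using the variables $z$ and $t$ as set up in the statement. Recall from the description of $\rho$ that reducing a tree means: for each rightmost leaf (a $t$-node) in a branch attached to the root, either delete it if it is a child of the root, or — if it sits deeper — delete all subtrees of its grandparent and re-mark the grandparent as a $t$-node. Consequently, building a tree $\sigma$ with $\rho(\sigma)=\tau$ amounts to the following reversible recipe applied independently to each branch of $\tau$: at the current rightmost leaf (a $t$-node at odd depth), we may either leave it as is, or replace it by a chain of length $2$ (two fresh $z$-nodes on the rightmost path) with arbitrary rooted plane trees hanging off to the left at each of the two new levels, and the $t$-node pushed down to the bottom of the new chain. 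Iterating this "may or may not grow" choice at each branch is exactly one application of $\rho^{-1}$.

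First I would make the branch-level bookkeeping explicit. Writing $S(z,t) = z + \frac{zt}{1-t-T^2}$ from Proposition~1, the nontrivial part $\frac{zt}{1-t-T^2} = \frac{z}{1-t-T^2}\cdot t$ decomposes a tree as root $\times \operatorname{SEQ}(\text{branch}) \times (\text{distinguished rightmost branch})$, where a single branch contributing a rightmost $t$-leaf has generating function $\frac{t}{1-t-T^2}$: it is a sequence of pairs of plane trees (the $T^2$) together with "plain" one-step extensions (to get the factor $\frac{1}{1-t-T^2}$ rather than $\frac{1}{1-T^2}$, we also allow individual $\mathcal{T}$-subtrees off the rightmost path, encoded by the extra $t$... — here I would re-derive the branch generating function carefully so that the substitution below is clean). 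The key point is that $\mathcal{F}$'s generating function $f(z,t)$ records, via $t$, exactly the locations where one growth step of $\rho^{-1}$ may act.

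Next I would carry out the substitution. In $\rho^{-1}$, each $t$-node either stays a $t$-node (contributing $t$) or is expanded into a length-$2$ chain with two arbitrary left-hanging plane-tree sequences and the $t$-node pushed to the bottom; a single such expansion step contributes $\frac{1}{1-T}\cdot z\cdot\frac{1}{1-T}\cdot z\cdot t = z^2 t/(1-T)^2$, and since $z + T^2 = T$ gives $z/(1-T) = T$ (equivalently $z = T - T^2 = T(1-T)$), this is $T^2 t$. Allowing any number $k\geq 0$ of such nested expansions before finally leaving a $t$-node replaces each occurrence of $t$ in $f$ by $\sum_{k\geq0} (T^2)^k t = \frac{t}{1-T^2}$... — wait, that is not quite $\frac{t}{1-t}T^2$, so the correct accounting must instead treat the final terminating $t$ separately: each level of the newly created chain is itself a site where future growth is impossible but a left-subtree sequence is present, and the marked leaf at depth $2r-1$ is what carries $t$; redoing this, the generating function seen by the old $t$ becomes $\frac{1}{1-t}\,\frac{t}{1-t}T^2$ from the outermost new node plus the geometric series of chain levels, and the global prefactor $\frac{1}{1-t}$ comes from the branches directly attached to the root (the "children of the root" case, deleted/inserted freely). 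I would organize this as: (i) one application of $\rho^{-1}$ inserts, at the root level, extra branches each being a single new $z$-node whose unique child is the pushed-down configuration — giving the prefactor $\frac{1}{1-t}$; (ii) simultaneously, each existing $t$-site in $f$ gets its $t$ replaced by $\frac{t}{1-t}T^2$, reflecting the chain-of-length-$2$ insertion with left-hanging trees. Substituting $t \mapsto \frac{t}{1-t}T^2$ into $f$ and multiplying by $\frac{1}{1-t}$ yields exactly \eqref{eq:expansion-operator}.

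The main obstacle I anticipate is pinning down the branch-level generating function and the exact form of the substitution so that the two geometric-series factors ($\frac{1}{1-t}$ globally and $\frac{t}{1-t}$ inside) land in the right places — i.e., disentangling which occurrences of "freedom to not grow" are already recorded by the $t$ in $f(z,t)$ versus which are newly created by the current $\rho^{-1}$ step. A clean way to avoid double-counting is to prove the identity directly on the symbolic specification: show that $\rho^{-1}(\mathcal{F})$ has the specification "$\mathcal{F}$, but with every $\blacksquare$-leaf replaced by $\operatorname{SEQ}_{\geq 1}$ of (new $z$-node $\times$ $\operatorname{SEQ}(\mathcal{T})$ $\times$ new $z$-node $\times$ $\operatorname{SEQ}(\mathcal{T})$) ending in $\blacksquare$, and additionally arbitrarily many root-children of this same shape", then read off the generating function using $z/(1-T) = T$ and $z/((1-T)(1-t)) $-type simplifications, recovering $\Phi(f) = \frac{1}{1-t} f(z, \frac{t}{1-t}T^2)$. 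Once the specification is correct, the translation is a routine application of the symbolic method.
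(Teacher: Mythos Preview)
Your overall strategy---understand $\rho^{-1}$ combinatorially and then translate the action into a substitution on generating functions---is exactly what the paper does. However, the combinatorial description you give of $\rho^{-1}$ is wrong in a way that produces precisely the bookkeeping confusion you flag at the end.

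The error is in your first paragraph: you say that at each existing $t$-leaf one may ``either leave it as is, or replace it by a chain of length $2$''. There is no ``leave it as is'' option. Under $\rho$, a rightmost leaf that is \emph{not} a child of the root always has its grandparent stripped and re-marked; hence any branch of $\tau$ with a $t$-leaf must have arisen from a branch of $\sigma$ in which that leaf sat two levels deeper. So in one application of $\rho^{-1}$, every existing $t$-leaf is \emph{mandatorily} pushed down by two levels with arbitrary plane trees attached at the two new nodes: each $t$ becomes $tT^{2}$, full stop, no geometric series. This is why your computation $\sum_{k\ge 0}(T^{2})^{k}t=t/(1-T^{2})$ went astray---you were iterating something that happens exactly once.

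The $(1-t)$ factors come from a separate mechanism that you never cleanly isolate: the branches of $\sigma$ that $\rho$ \emph{deletes entirely} are exactly the single-leaf children of the root. Reversing this, $\rho^{-1}$ allows inserting arbitrarily many fresh single-node $t$-branches at the root, and if $\tau$ has $k$ existing branches there are $k+1$ gaps in which to place independent sequences of such new leaves, contributing $(1-t)^{-(k+1)}$. The paper makes this transparent by reducing via linearity to a monomial $z^{n}t^{k}$ and reading off
\[
\Phi(z^{n}t^{k})=z^{n}\cdot (tT^{2})^{k}\cdot (1-t)^{-(k+1)}=\frac{1}{1-t}\,z^{n}\Bigl(\frac{tT^{2}}{1-t}\Bigr)^{k},
\]
from which \eqref{eq:expansion-operator} follows immediately. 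Once you correct the description of $\rho^{-1}$ to ``mandatory two-level extension at each old $t$, plus free insertion of new single-$t$ branches in the $k+1$ gaps at the root'', the obstacle you anticipated disappears and no symbolic-specification detour is needed.
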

\begin{proof}
  From a combinatorial point of view it is obvious that the operator $\Phi$ has to be
  linear, meaning that we can focus on determining all possible expansions of some tree
  represented by the monomial $z^{n}t^{k}$, i.e.\ a tree where the root has $k$ children
  (and thus $k$ different rightmost leaves in the branches attached to the root), and $n$ other nodes.

  In order to expand such a tree represented by $z^{n}t^{k}$ we begin by inserting a chain
  of length two before every rightmost leaf in order to ensure that the distance to the
  root is still odd. These newly inserted nodes can now be considered to be roots of some
  rooted plane trees, meaning that we actually insert two arbitrary rooted plane trees
  before every node represented by $t$. This corresponds to a factor of $t^{k}
  T^{2k}$.

  In addition to this operation, we are also allowed to add new children to the root,
  i.e.\ we can add sequences of nodes represented by $t$ before or after every child of
  the root. As observed above, the root has $k$ children and thus there are $k+1$
  positions where such a sequence can be attached. This corresponds to a
  factor of $(1-t)^{-(k+1)}$.

  Finally, we observe that nodes that are represented by $z$ are not expanded in any way,
  meaning that $z^{n}$ remains as it is.

  Putting everything together yields that
  \[ \Phi(z^{n}t^{k}) = \frac{1}{1-t} z^{n} \Big(\frac{t T^{2}}{1-t}\Big)^{k},  \]
  which, by linearity of $\Phi$, proves the statement.
\end{proof}

\begin{corollary}
  The generating function $S(z,t)$ satisfies the functional equation
  \[ \Phi(S(z,t)) = S(z,t).  \]
\end{corollary}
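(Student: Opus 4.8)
The claim is that the generating function $S(z,t)$ of all Catalan--Stanley trees is a fixed point of the expansion operator $\Phi$. The plan is to exploit the fact that $\Phi$ was explicitly computed in the previous proposition: applying $\Phi$ to $S(z,t)$ is just a substitution together with a prefactor, so the whole statement reduces to a routine algebraic verification using the closed form~\eqref{eq:catalan-stanley-gf} for $S(z,t)$ together with the defining relation $z + T^2 = T$ from~\eqref{eq:simplify-T}.

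Concretely, I would first write down what $\Phi(S(z,t))$ is by specializing~\eqref{eq:expansion-operator}:
\[
  \Phi(S(z,t)) = \frac{1}{1-t}\, S\!\Big(z, \frac{t}{1-t}\,T^2\Big)
  = \frac{1}{1-t}\Bigg( z + \frac{z\,\frac{t}{1-t}T^2}{\,1 - \frac{t}{1-t}T^2 - T^2\,}\Bigg).
\]
Then I would clear the inner denominator. Multiplying numerator and denominator of the fraction by $1-t$ turns the inner denominator into $(1-t) - tT^2 - (1-t)T^2 = 1 - t - T^2$, so that
\[
  \Phi(S(z,t)) = \frac{1}{1-t}\Bigg( z + \frac{z\,t\,T^2}{1 - t - T^2}\Bigg)
  = \frac{z}{1-t}\cdot\frac{1 - t - T^2 + tT^2}{1-t-T^2}
  = \frac{z}{1-t}\cdot\frac{(1-t)(1-T^2) + (\text{correction})}{1-t-T^2}.
\]
The key simplification is that $1 - t - T^2 + tT^2 = (1-t)(1-T^2)$, which makes the factor $\tfrac{1}{1-t}$ cancel, leaving $\Phi(S(z,t)) = \dfrac{z(1-T^2)}{1-t-T^2}$. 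Finally I would use $1 - T^2 = 1 - (T - z) = 1 - T + z$ — wait, more directly $z + T^2 = T$ gives $T^2 = T - z$, hence $1 - T^2 = 1 - T + z$, and also $z(1-T^2) = z - zT^2$; rather, the cleanest route is to observe that $z(1-T^2) = z(1 - t - T^2) + zt$, so
\[
  \Phi(S(z,t)) = \frac{z(1-t-T^2) + zt}{1-t-T^2} = z + \frac{zt}{1-t-T^2} = S(z,t),
\]
by~\eqref{eq:catalan-stanley-gf}. That closes the argument.

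There is essentially no obstacle here: the proposition computing $\Phi$ does all the combinatorial work, and what remains is bookkeeping with one substitution and the quadratic relation for $T$. The only place to be slightly careful is the identity $1 - t - T^2 + tT^2 = (1-t)(1-T^2)$ and making sure the $\tfrac{1}{1-t}$ prefactor cancels exactly; everything else is immediate. One could alternatively prove the corollary purely combinatorially — $\rho^{-1}(\mathcal{S}) = \mathcal{S}$ because every Catalan--Stanley tree reduces to a Catalan--Stanley tree and every Catalan--Stanley tree arises as such a reduction — but the generating-function computation is shorter and self-contained given the preceding proposition, so that is the route I would take.
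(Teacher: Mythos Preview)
Your algebraic verification is correct: substituting $\tfrac{t}{1-t}T^{2}$ into $S$, clearing the inner denominator to get $1-t-T^{2}$, factoring $1-t-T^{2}+tT^{2}=(1-t)(1-T^{2})$ so that the prefactor cancels, and then splitting $z(1-T^{2}) = z(1-t-T^{2})+zt$ recovers $S(z,t)$ exactly. (Incidentally, the quadratic relation $z+T^{2}=T$ that you flag is never actually needed in your final route.)

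The paper, however, takes precisely the combinatorial alternative you mention at the end: it argues in one line that $\Phi(S(z,t)) = S(z,t)$ because the reduction $\rho$ is surjective on $\mathcal{S}$, so $\rho^{-1}(\mathcal{S}) = \mathcal{S}$ and the generating functions must agree. Your route has the advantage of being a self-contained check that does not rely on the combinatorial correctness of $\Phi$ as the preimage operator; the paper's route is considerably shorter and explains \emph{why} the identity holds rather than merely confirming it. Both are valid, but your remark that the algebraic computation is ``shorter'' is not quite accurate here --- the paper's argument is a single sentence.
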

\begin{proof}
  This follows immediately from the fact that the reduction operator $\rho$ is surjective,
  as discussed above.
\end{proof}

Actually, in order to carry out a thorough analysis of this growth process for
Catalan--Stanley trees we need to have more information on the iterated application of the
expansion. In particular, we need a precise characterization of the family of
Catalan--Stanley trees that can be grown from some given tree family by expanding it a
fixed number of times.

\begin{proposition}\label{prop:family-expansion}
  Let $r\in \Z_{\geq 0}$ be fixed and $\mathcal{F}\subseteq \mathcal{S}$ be a family of
  Catalan--Stanley trees with bivariate generating function $f(z,t)$.
  Then the family of trees obtained by expanding the trees in $\mathcal{F}$ $r$ times is
  enumerated by the generating function
  \begin{equation}\label{eq:family-expansion}
    \Phi^{r}(f(z,t)) = \frac{1}{1 - t \frac{1 - T^{2r}}{1 - T^{2}}}
    f\Big(z, \frac{t T^{2r}}{1 - t \frac{1 - T^{2r}}{1 -
        T^{2}}}\Big).
  \end{equation}
\end{proposition}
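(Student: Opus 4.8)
The plan is to prove the formula by induction on $r$, using the single-step expansion formula~\eqref{eq:expansion-operator} from the previous proposition as the base case and the inductive step. For $r = 0$ the right-hand side of~\eqref{eq:family-expansion} collapses: the prefactor becomes $1/(1 - t\cdot 0) = 1$ (since $T^0 = 1$ gives numerator $1 - 1 = 0$), and the argument of $f$ becomes $t T^0 / 1 = t$, so $\Phi^0(f(z,t)) = f(z,t)$, as it must. The case $r = 1$ should reduce to~\eqref{eq:expansion-operator} directly, since $\frac{1 - T^2}{1 - T^2} = 1$.

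For the inductive step I would assume~\eqref{eq:family-expansion} holds for some $r$ and compute $\Phi^{r+1}(f) = \Phi(\Phi^r(f))$ by applying the single-step operator~\eqref{eq:expansion-operator} to the function $g(z,t) := \Phi^r(f(z,t))$. Writing $g(z,t) = \frac{1}{1 - t\, a_r}\, f\bigl(z, \frac{t\, T^{2r}}{1 - t\, a_r}\bigr)$ with the shorthand $a_r := \frac{1 - T^{2r}}{1 - T^2}$, I would substitute $t \mapsto \frac{t}{1-t}T^2$ everywhere $t$ appears (and multiply by $\frac{1}{1-t}$), then simplify. The prefactor becomes
\[
  \frac{1}{1-t}\cdot \frac{1}{1 - \frac{t}{1-t}T^2 a_r} = \frac{1}{1 - t - t T^2 a_r} = \frac{1}{1 - t(1 + T^2 a_r)},
\]
and one checks $1 + T^2 a_r = 1 + T^2\frac{1 - T^{2r}}{1 - T^2} = \frac{1 - T^2 + T^2 - T^{2r+2}}{1 - T^2} = \frac{1 - T^{2(r+1)}}{1 - T^2} = a_{r+1}$, which is exactly the recursion $a_{r+1} = 1 + T^2 a_r$ satisfied by these truncated geometric sums. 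So the new prefactor is $\frac{1}{1 - t\, a_{r+1}}$, matching the claim with $r+1$ in place of $r$. It then remains to verify that the inner argument transforms correctly: the second argument of $f$ becomes $\frac{\frac{t}{1-t}T^2 \cdot T^{2r}}{1 - \frac{t}{1-t}T^2 a_r} = \frac{t\, T^{2(r+1)}}{1 - t - t T^2 a_r} = \frac{t\, T^{2(r+1)}}{1 - t\, a_{r+1}}$, again exactly the required form.

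The main obstacle is purely bookkeeping: keeping the nested substitutions straight and recognizing the geometric-sum recursion $a_{r+1} = 1 + T^2 a_r$ that makes the prefactor and the inner argument close up under one more application of $\Phi$. Once that recursion is identified, both the prefactor and the argument of $f$ fall out by the same elementary manipulation (clearing the $\frac{1}{1-t}$ factor against the denominator $1 - \frac{t}{1-t}T^2 a_r$), so no genuinely new computation is needed beyond the single-step proposition. An alternative, slightly slicker route would be to observe that the family of Möbius-type substitutions $t \mapsto \frac{t T^{2}}{1 - t\, a}$ (together with the accompanying scalar prefactor) is closed under composition and compute the composition of two such maps directly; but the straightforward induction above is cleaner to write and I would present that.
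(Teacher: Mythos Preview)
Your induction is correct and complete: the base cases are fine, and the key recursion $a_{r+1} = 1 + T^{2}a_{r}$ for the truncated geometric sums $a_{r} = \frac{1 - T^{2r}}{1 - T^{2}}$ is exactly what makes both the prefactor and the inner argument close up under one more application of $\Phi$.

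The paper arrives at the same formula by a slightly different organization. Rather than inducting on $\Phi^{r}$ directly, it isolates the purely substitutional part as a multiplicative operator $\Psi(f)(z,t) = f\bigl(z, \frac{t}{1-t}T^{2}\bigr)$, writes $\Phi^{r}(f) = \Psi^{r}(f)\prod_{j=0}^{r-1}\frac{1}{1 - \Psi^{j}(t)}$, proves the closed form for $\Psi^{r}(t)$ by induction, and then obtains the product $\prod_{j=0}^{r-1}(1 - \Psi^{j}(t))$ via the telescoping identity $\Psi^{j}(t) = \frac{\Psi^{j-1}(t)}{1 - \Psi^{j-1}(t)}T^{2}$. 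Your single induction folds these two pieces into one computation and is arguably more streamlined; the paper's decomposition, on the other hand, makes the multiplicative structure of the substitution explicit and separates the ``where does $t$ go'' question from the ``what is the accumulated prefactor'' question, which can be conceptually helpful even if it takes a few more lines. Either argument is perfectly acceptable here.
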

\begin{proof}
  By linearity, it is sufficient to determine the generating function for the family of
  trees obtained by expanding some tree represented by $z^{n}t^{k}$. Consider the closely related multiplicative operator $\Psi$ with
  \[ \Psi(f(z,t)) = f\big(z, \frac{t}{1-t} T^{2}\big).  \]
  It is easy to see that we can write the $r$-fold application of $\Phi$ with
  the help of $\Psi$ as
  \[ \Phi^{r}(f(z,t)) = \Psi^{r}(f(z,t)) \prod_{j=0}^{r-1} \frac{1}{1 -
      \Psi^{j}(t)}. \]
  As $\Psi$ is multiplicative, we have
  \[ \Psi^{r}(z^{n}t^{k}) = \Psi^{r}(z)^{n} \Psi^{r}(t)^{k},  \]
  meaning that we only have to investigate the $r$-fold application of $\Psi$ to
  $z$ and to $t$.

  We immediately see that $\Psi^{r}(z) = z$, as $\Psi$ maps $z$ to $z$
  itself. For $\Psi^{r}(t)$, we can prove by induction that the relation
  \[ \Psi^{r}(t) = \frac{t T^{2r}}{1 - t \frac{1 - T^{2r}}{1 -
        T^{2}}}  \]
  holds for $r\geq 0$. Finally, observe that for $j\geq 1$ we have
  \begin{equation}\label{eq:observation}
    \Psi^{j}(t) = \frac{\Psi^{j-1}(t)}{1 - \Psi^{j-1}(t)} T^{2},
  \end{equation}
  and thus
  \[ \Psi^{r}(t) = \frac{\Psi^{r-1}(t)}{1 - \Psi^{r-1}(t)} T^{2} =
    \frac{\Psi^{r-2}(t)}{(1 - \Psi^{r-2}(t))(1 - \Psi^{r-1}(t))} T^{4} =
    \dots = \frac{t T^{2r}}{\prod_{j=0}^{r-1} (1 - \Psi^{j}(t))}  \]
  by iteratively using~\eqref{eq:observation} in the numerator. With our explicit
  formula for $\Psi^{r}(t)$ from above this yields
  \[ \prod_{j=0}^{r-1} (1 - \Psi^{j}(t)) = 1 - t \frac{1 - T^{2r}}{1 -
      T^{2}} \]
  for $r\geq 1$. Putting everything together we obtain
  \[ \Phi^{r}(z^{n}t^{k}) = \frac{1}{1 - t \frac{1 - T^{2r}}{1 - T^{2}}}
    z^{n} \Psi^{r}(t)^{k}, \]
  which proves~\eqref{eq:family-expansion} by linearity of $\Phi^{r}$.
\end{proof}

From this characterization we immediately obtain the generating functions for all the
classes of objects we will investigate in the following sections.

\begin{corollary}\label{cor:leq-eq-gf}
  Let $r\in\Z_{\geq 0}$. The generating function $F_{r}^{\leq}(z,t)$ enumerating
  Catalan--Stanley trees of age less than or equal to $r$ where $t$ marks the rightmost
  leaves in the branches attached to the root and $z$ marks all other nodes is given by
  \begin{equation}\label{eq:leq-gf}
      F_{r}^{\leq}(z,t) = \frac{z}{1 - t \frac{1 - T^{2r}}{1 - T^{2}}}.
  \end{equation}
\end{corollary}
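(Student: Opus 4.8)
The plan is to identify the class of Catalan--Stanley trees of age at most $r$ with the iterated expansion $(\rho^{-1})^{r}(\innernode)$ and then to read off its generating function directly from Proposition~\ref{prop:family-expansion}.

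First I would check that the sequence of families $(\rho^{-1})^{j}(\innernode)$, $j\geq 0$, is nondecreasing with respect to inclusion. Since $\rho(\innernode) = \innernode$, the singleton family $\{\innernode\}$ is contained in $\rho^{-1}(\{\innernode\})$; combined with the obvious monotonicity of $\rho^{-1}$ (if $\mathcal{A}\subseteq\mathcal{B}$ then $\rho^{-1}(\mathcal{A})\subseteq\rho^{-1}(\mathcal{B})$), an induction on $j$ gives $(\rho^{-1})^{j}(\innernode)\subseteq(\rho^{-1})^{j+1}(\innernode)$. Consequently, by the definition of the age of a tree, a Catalan--Stanley tree $\tau$ satisfies $\alpha(\tau)\leq r$ if and only if $\tau\in(\rho^{-1})^{r}(\innernode)$; that is, $F_{r}^{\leq}(z,t)$ is exactly the generating function of the family $(\rho^{-1})^{r}(\innernode)$.

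Next, the family $\{\innernode\}$ consists of the single tree that is just the root. As the root is not one of the rightmost leaves in a branch attached to the root, it is marked by $z$, so the bivariate generating function of $\{\innernode\}$ is simply $f(z,t) = z$. Applying the expansion operator $r$ times and invoking Proposition~\ref{prop:family-expansion} with this $f$ then yields
\[ F_{r}^{\leq}(z,t) = \Phi^{r}(z) = \frac{1}{1 - t\frac{1 - T^{2r}}{1 - T^{2}}}\, z, \]
because $f(z,t) = z$ does not depend on $t$, so the substitution in the second argument of $f$ in~\eqref{eq:family-expansion} has no effect. This is precisely~\eqref{eq:leq-gf}.

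The only genuinely delicate point is the bookkeeping in the first step: one has to make sure that ``age at most $r$'' really translates into membership in $(\rho^{-1})^{r}(\innernode)$ rather than into a union over all $j\leq r$, which is what the nestedness argument settles. Everything else is a direct specialization of the already-established formula for $\Phi^{r}$, so I do not expect any further obstacle.
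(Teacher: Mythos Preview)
Your proposal is correct and matches the paper's approach: the paper likewise notes that $\rho(\innernode)=\innernode$ forces $\innernode\in\rho^{-1}(\innernode)$, whence $F_{r}^{\leq}(z,t)=\Phi^{r}(z)$, and then reads off the formula from Proposition~\ref{prop:family-expansion}. Your write-up is simply more explicit about the nestedness step than the paper's one-line justification.
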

\begin{proof}
  As we defined $\rho(\innernode) = \innernode$ we have $\innernode \in
  \rho^{-1}(\innernode)$, which implies $F_{r}^{\leq}(z,t)$ is given by $\Phi^{r}(z)$.
\end{proof}

\begin{corollary}\label{cor:size-change-gf}
  Let $r \geq 0$. Then the generating function $G_{r}(z,v)$ enumerating
  Catalan--Stanley trees where $z$ marks the tree size and $v$ marks the size of
  the $r$-fold reduced tree, is given by
  \begin{equation}\label{eq:size-change-gf}
    G_{r}(z,v) = \Phi^{r}(S(zv,tv))|_{t=z} = \frac{1}{1 - z\frac{1  -T^{2r}}{1 -
    T^{2}}} S\bigg(zv, \frac{z T^{2r}}{1 - z \frac{1 - T^{2r}}{1 - T^{2}}}v\bigg).  \end{equation}
\end{corollary}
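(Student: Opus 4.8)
The plan is to prove the first equality $G_{r}(z,v) = \Phi^{r}(S(zv,tv))\big|_{t=z}$ combinatorially, and then to obtain the closed form on the right-hand side of~\eqref{eq:size-change-gf} by a direct substitution into Proposition~\ref{prop:family-expansion}.

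For the combinatorial identity, recall from the construction underlying~\eqref{eq:expansion-operator} that $\Phi^{r}$ is linear and that, for a Catalan--Stanley tree $\sigma$ having $n$ nodes that are not rightmost leaves in a branch attached to the root and $k$ nodes that are, the series $\Phi^{r}(z^{n}t^{k})$ enumerates the trees in $(\rho^{-1})^{r}(\{\sigma\})$ --- that is, the fibre of $\rho^{r}$ over $\sigma$ --- with $z$ marking all nodes of a preimage $\tau$ other than its rightmost root-branch leaves and $t$ marking the latter. Since such a $\sigma$ has size $n+k$ and corresponds to the monomial $z^{n}t^{k}$ in $S(z,t)$, we have $S(zv,tv) = \sum_{\sigma\in\mathcal{S}} v^{\abs{\sigma}}\, z^{n(\sigma)} t^{k(\sigma)}$, and because $v$ does not occur in~\eqref{eq:expansion-operator} it is inert under $\Phi^{r}$, so
\[ \Phi^{r}\bigl(S(zv,tv)\bigr) = \sum_{\sigma\in\mathcal{S}} v^{\abs{\sigma}}\,\Phi^{r}\bigl(z^{n(\sigma)} t^{k(\sigma)}\bigr). \]
Specialising $t=z$ turns $\Phi^{r}(z^{n(\sigma)}t^{k(\sigma)})$ into $\sum_{\tau:\,\rho^{r}(\tau)=\sigma} z^{\abs{\tau}}$, since all nodes of each preimage are then counted by $z$. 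As the fibres of $\rho^{r}\colon\mathcal{S}\to\mathcal{S}$ partition $\mathcal{S}$, summing over $\sigma$ with weight $v^{\abs{\sigma}}=v^{\abs{\rho^{r}(\tau)}}$ yields $\Phi^{r}(S(zv,tv))\big|_{t=z} = \sum_{\tau\in\mathcal{S}} z^{\abs{\tau}} v^{\abs{\rho^{r}(\tau)}} = G_{r}(z,v)$.

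For the explicit formula, I would apply Proposition~\ref{prop:family-expansion} with $f(z,t)=S(zv,tv)$: since replacing the second argument of this $f$ by $\xi$ gives $S(zv,\xi v)$, formula~\eqref{eq:family-expansion} immediately yields
\[ \Phi^{r}\bigl(S(zv,tv)\bigr) = \frac{1}{1 - t\,\frac{1-T^{2r}}{1-T^{2}}}\; S\!\left(zv,\; \frac{t\,T^{2r}}{1 - t\,\frac{1-T^{2r}}{1-T^{2}}}\, v\right), \]
and setting $t=z$ gives precisely~\eqref{eq:size-change-gf}. The only point that needs genuine care is the one in the previous paragraph: one must check that, after the substitution $t\mapsto\Psi^{r}(t)$ performed inside $S(zv,tv)$ and the final specialisation $t=z$, the exponent of $v$ in every monomial equals the size of the associated reduced tree while the exponent of $z$ equals the size of $\tau$. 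This reduces to observing, in accordance with the description of $\rho$, that a marked leaf of $\sigma$ survives as a marked leaf of $\tau$ (merely pushed deeper at each expansion step) and that ordinary nodes of $\sigma$ remain untouched, so that the factor $v^{\abs{\sigma}}$ is carried entirely by the original nodes of $\sigma$, whereas the remaining symbols in~\eqref{eq:family-expansion} --- the factors $T=T(z)$ and the denominators --- record only nodes created during the $r$ expansions. Granting this, the rest is a mechanical computation.
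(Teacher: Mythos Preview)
Your proof is correct and follows the same approach as the paper's own argument: interpret $S(zv,tv)$ as the enumeration of Catalan--Stanley trees with the extra variable $v$ recording total size, observe that $v$ is inert under $\Phi^{r}$ so that after expansion it records the size of the (now $r$-fold reduced) original tree, specialise $t=z$, and then read off the explicit formula from Proposition~\ref{prop:family-expansion}. Your version is a bit more explicit about the fibre partition of $\mathcal{S}$ under $\rho^{r}$, but the substance is identical.
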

\begin{proof}
  Observe that the generating function $S(zv,tv)$ enumerates Catalan--Stanley trees with
  respect to the number of rightmost leaves in the branches attached to the root (marked by
  $t$), the number of other nodes (marked by $z$), and the size of the tree (marked by
  $v$). Applying the operator $\Phi^{r}$ to this generating function thus yields a
  generating function where $v$ still marks the size of the tree, $t$ and $z$ however
  enumerate the number of rightmost leaves in the branches attached to the root and all
  other nodes of the $r$-fold expanded tree, respectively. After setting $t=z$, we obtain
  a generating function where $v$ marks the size of the original tree and $z$ the size of
  the $r$-fold expanded tree---which is equivalent to the formulation in the corollary. 
\end{proof}

\section{The Age of Catalan--Stanley Trees}\label{sec:age}

In this section we want to give a proper analysis of the parameter $\alpha$ defined in the
previous section. Formally, we do this by considering the random variable $D_{n}$
modeling the age of a tree of size $n$, where all Catalan--Stanley trees of size $n$ are
equally likely.

\begin{remark}
  It is noteworthy that in~\cite{Hackl-Heuberger-Prodinger:2018:register-reduction} it was shown
  that the well-known register function of a binary tree can also be obtained as the
  number of times some reduction can be applied to the binary tree until it
  degenerates. The age of a Catalan--Stanley tree can thus be seen as a ``register
  function''-type parameter as well.
\end{remark}

First of all, we are interested in the minimum and maximum age a tree of size $n$ can
have.

\begin{proposition}\label{prop:age-bounds}
  Let $n\in \Z_{\geq 2}$. Then the bounds
  \begin{equation}\label{eq:age-bounds}
    1 \leq D_{n} \leq \Big\lfloor\frac{n}{2}\Big\rfloor
  \end{equation}
  hold and are sharp, i.e.\ there are trees $\tau$, $\tau'\in\mathcal{S}$ of size $n\geq
  2$ such that $D_{n}(\tau) = 1$ and $D_{n}(\tau') = \lfloor n/2\rfloor$ hold. The only tree of size $1$ is
  $\innernode$, and it satisfies $D_{1}(\innernode) = 0$.
\end{proposition}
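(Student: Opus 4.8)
The plan is to establish the two bounds separately, in each case pinning down the extremal trees explicitly. For the lower bound $D_n \geq 1$, I observe that the only tree of age $0$ is $\innernode$ itself, which has size $1$; hence every tree of size $n \geq 2$ has age at least $1$. This is immediate from the definition of $\alpha$ and needs essentially no work. For sharpness of the lower bound, I exhibit, for each $n \geq 2$, a Catalan--Stanley tree of size $n$ that reduces to $\innernode$ in a single step: by the description of $\rho^{-1}$, it suffices to take the root with a single child that is itself the root of a chain — more precisely, recalling the remark that $\alpha(\tau) = r$ iff the maximum depth of the rightmost leaves in the branches attached to the root is $2r-1$, I want a tree of size $n$ all of whose rightmost branch-leaves sit at depth exactly $1$. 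The simplest such tree is the root with $n-1$ children (a star), which lies in $\mathcal{S}$ since all branches have length $1$, has size $n$, and has $\alpha = 1$.

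For the upper bound $D_n \leq \lfloor n/2 \rfloor$, the key is again the remark: if $\alpha(\tau) = r$, there is a rightmost branch-leaf at depth $2r-1$, so the corresponding root-to-leaf path contains $2r$ nodes besides the root is wrong — it contains the root plus $2r-1$ further nodes, i.e.\ $2r$ nodes in total counting the root, hence at least $2r$ nodes in the tree. Actually one has to be slightly careful: the path from the root to a leaf at depth $2r-1$ consists of $2r$ nodes (the root at depth $0$ up to the leaf at depth $2r-1$), so $n \geq 2r$, giving $r \leq n/2$ and therefore $r \leq \lfloor n/2 \rfloor$ since $r$ is an integer. For sharpness, I construct for each $n \geq 2$ a tree of size $n$ achieving age $\lfloor n/2 \rfloor$: when $n = 2m$, take the root with a single pendant path of length $2m-1$ (so depth of the unique branch-leaf is $2m-1 = 2\lfloor n/2\rfloor - 1$, size $2m = n$); when $n = 2m+1$, take that same path of $2m$ nodes and attach one extra child to the root, which keeps the deepest rightmost branch-leaf at depth $2m-1$ while bringing the size to $2m+1 = n$ and the age to $m = \lfloor n/2 \rfloor$. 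One should check these trees are genuinely in $\mathcal{S}$, i.e.\ that \emph{all} rightmost branch-leaves have odd depth: the pendant-path branch has its leaf at odd depth $2m-1$, and the extra single-node branch has its leaf at depth $1$, so both are odd.

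The main obstacle is not any single hard step but rather making sure the extremal constructions are stated cleanly and that one correctly invokes the remark relating age to the depth of rightmost branch-leaves; the bound itself then falls out of the trivial observation that a path of $k$ nodes forces the tree to have size at least $k$. I would therefore lead with a precise restatement of that remark, then handle the lower bound in two sentences, and devote the bulk of the argument to the upper bound's sharpness, treating the even and odd cases of $n$ in parallel. Finally, the statement $D_1(\innernode) = 0$ is immediate from the convention $\alpha(\innernode) = 0$ together with the fact that $\innernode$ is the unique tree of size $1$.
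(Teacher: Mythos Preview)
Your proposal is correct and largely parallels the paper's own proof: the lower bound argument and all the extremal constructions (the star for age $1$; the chain of length $n$ for even $n$ and the chain of length $2m$ with an extra child at the root for odd $n=2m+1$) are exactly what the paper does.

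The one genuine difference is in how you obtain the upper bound $r\le \lfloor n/2\rfloor$. You invoke the Remark that $\alpha(\tau)=r$ is equivalent to the deepest rightmost branch-leaf sitting at depth $2r-1$, and then read off $n\ge 2r$ from the length of that root-to-leaf path. The paper instead argues directly from the mechanics of $\rho$: for a tree of size $\ge 3$ the reduction deletes at least two nodes, and for the tree of size $2$ it deletes one, so $1=|\rho^{r}(\tau)|\le n-2(r-1)-1$. Your route is shorter and perfectly valid given that the Remark precedes this proposition, though it leans on a characterization the paper states without proof; the paper's route is self-contained at the level of the definition of $\rho$. Either way the bound and its sharpness follow.
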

\begin{proof}
  Note that only $\innernode$, the tree of size $1$ has age $0$, therefore the lower bound
  is certainly valid for trees of size $n \geq 2$. This lower bound is sharp, as the tree
  with $n-1$ children attached to the root is a Catalan--Stanley tree and has age $1$.

  For the upper bound, first observe that given a tree of size $n\geq 3$ the reduction
  $\rho$ always removes at least $2$ nodes from the tree. If the tree is of size $2$, then
  $\rho$ only removes one node. Given an arbitrary Catalan--Stanley tree $\tau$ of age $r$
  and size $n$, this means that
  \[ 1 = \abs{\innernode} = \abs{\rho^{r}(\tau)} \leq \abs{\tau} - 2\cdot(r-1) - 1 = n -
    2r + 1,  \]
  where $\abs{\tau}$ denotes the size of the tree $\tau$. This yields $r \leq n/2$,
  and as $r$ is known to be an integer we may take the floor of the number on the
  right-hand side of the inequality. This proves that the upper bound
  in~\eqref{eq:age-bounds} is valid.

  The upper bound is sharp because we can construct appropriate families of trees precisely
  reaching the upper bound: for even $n$, the chain of size $n$ is a Catalan--Stanley tree
  of age $n/2$. For odd $n = 2m+1$ we consider the chain of size $2m$ and attach one
  node to the root of it. The resulting tree is a Catalan--Stanley tree of age $m = \lfloor
  n/2\rfloor$, and thus proves that the bound is sharp.
\end{proof}

By investigating the generating functions obtained from Corollary~\ref{cor:leq-eq-gf} we
can characterize the limiting distribution of the age of Catalan--Stanley trees when the
size $n$ tends to $\infty$.

\begin{theorem}\label{thm:age-analysis}
  Consider $n\to\infty$. Then the age of a (uniformly random) Catalan--Stanley tree of size
  $n$ behaves according to a discrete limiting distribution where
  \begin{multline}\label{eq:prob-age}
    \P(D_{n} = r) = \Big(\frac{4(4^{r}(3r-1) + 1)}{(4^{r} + 2)^{2}} - \frac{4(4^{r+1}(3r+2) +
      1)}{(4^{r+1} + 2)^{2}}\Big)\\
    - \Big(
    \frac{6\cdot 64^{r}(2r^{3} - 5r^{2} + 4r - 1) - 6\cdot 16^{r}(16r^{3} - 24r^{2} +10r -
      1) + 24\cdot 4^{r} (2r^{3} - r^{2})}{(4^{r}+2)^{4}} \\
    - \frac{6\cdot 64^{r+1} (2r^{3} + r^{2}) - 6\cdot 16^{r+1} (16r^{3} + 24r^{2} + 10r + 1)
      + 24\cdot 4^{r+1} (2r^{3} + 5r^{2} + 4r + 1)}{(4^{r+1} + 2)^{4}}
    \Big) n^{-1}\\ + O\Big(\frac{r^{5}}{3^{r}} n^{-2}\Big)
  \end{multline}
  for $r \in \Z_{\geq 1}$, and the $O$-term holds uniformly in $r$. Additionally, by setting
  \begin{align*}
    c_{0} & = \sum_{r\geq 1} \frac{4^{r+1} (3r-1) + 4}{(4^{r} + 2)^{2}}\\
          & = 2.7182536428679528526648361928219367344585435680344\ldots,\\
    c_{1} & = - \sum_{r\geq 1} \frac{6\cdot 64^{r} (2r^{3} - 5r^{2} + 4r - 1) - 6\cdot
            16^{r}(16r^{3} - 24r^{2} + 10r - 1) + 24\cdot 4^{r} (2r-1)
            r^{2}}{(4^{r}+2)^{4}}\\
          & = -4.2220971510158840823821873477600478080816411210406\ldots,  \\
    c_{2} & = \sum_{r\geq 1} (2r-1)\frac{4^{r+1} (3r-1) + 4}{(4^{r} + 2)^{2}} - c_{0}^{2}\\
          & = 0.91845604214374797357797147814019496503688953933967\ldots,\\
    c_{3} & = \begin{multlined}[t]
            - \sum_{r\geq 1} \frac{(2r-1)}{(4^{r}+2)^{4}} \big(6\cdot 64^{r} (2r^{3} - 5r^{2} + 4r - 1) - 6\cdot
            16^{r}(16r^{3} - 24r^{2} + 10r - 1)\\
            + 24\cdot 4^{r} (2r-1) r^{2}\big) - 2c_{0}c_{1}
            \end{multlined}\\
          & = -9.1621753200836274996912436568310268988536534594942\ldots,\\
  \end{align*}
  the expected age and the corresponding variance are given by the asymptotic expansions
  \begin{equation}\label{eq:age-expectation}
    \E D_{n} = c_{0} + c_{1} n^{-1} + O(n^{-2}),
  \end{equation}
  \begin{equation}\label{eq:age-variance}
    \V D_{n} = c_{2} + c_{3} n^{-1} + O(n^{-2}).
  \end{equation}
\end{theorem}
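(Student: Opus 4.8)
The plan is to extract the probabilities $\P(D_n=r)$ by singularity analysis applied to the generating functions from Corollary~\ref{cor:leq-eq-gf}, and then to obtain the moments by summing the resulting asymptotic expansions over $r$. First I would observe that a tree has age exactly $r$ iff it has age $\le r$ but not age $\le r-1$, so that, writing $F_r^{\le}(z) := F_r^{\le}(z,z)$ for the univariate (size-only) specialisation, the generating function for trees of age exactly $r$ is $F_r^{\le}(z) - F_{r-1}^{\le}(z)$. Using~\eqref{eq:leq-gf} together with the relation $z + T^2 = T$ from~\eqref{eq:simplify-T}, I would simplify $F_r^{\le}(z) = z/(1 - z\,\frac{1-T^{2r}}{1-T^2})$ into a closed form in $T$ alone; the denominator becomes $1 - \frac{z}{1-T^2}(1-T^{2r}) = 1 - \frac{T}{1-T^2}\cdot\frac{1-T^{2r}}{1}$ after substituting $z = T - T^2 = T(1-T)$, which is a rational function of $T$. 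Since $T = T(z) = \frac{1-\sqrt{1-4z}}{2}$ has its dominant singularity at $z = 1/4$ with $T(1/4) = 1/2$ and the local expansion $T = \frac12 - \frac12\sqrt{1-4z} + \cdots$, standard singularity analysis of these rational-in-$T$ functions yields, for each fixed $r$, an expansion of $[z^n]$ of the form $\frac{4^n}{n\sqrt{\pi n}}(a_r + b_r n^{-1} + \cdots)$.

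Next I would divide by the total count $[z^n]S(z,z) = [z^n]z(T+1) = C_{n-2} \sim \frac{4^n}{4n\sqrt{\pi n}}(1 + O(n^{-1}))$ to get $\P(D_n = r)$ as an asymptotic series in $n^{-1}$ whose coefficients are explicit rational functions of $4^r$. This is where the bookkeeping is heaviest: one must carry two orders of the singularity expansion of $F_r^{\le} - F_{r-1}^{\le}$ at $z=1/4$, track the dependence on $r$ through the quantities $T^{2r}\big|_{z=1/4} = 4^{-r}$ and its $z$-derivatives, and divide two asymptotic series. The constants $4^r(3r-1)+1$, $(4^r+2)^2$, the cubic-in-$r$ numerators, and $(4^r+2)^4$ in~\eqref{eq:prob-age} are exactly what emerges; I would carry this out with the SageMath asymptotics package cited in the introduction, and then verify the leading coefficients by hand for small $r$. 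The uniformity of the $O(r^5 3^{-r} n^{-2})$ error term requires a genuine (not merely pointwise-in-$r$) error bound: here I would note that the secondary singularity contributions and the remainder in the transfer theorem are controlled by the distance of the next singularity, and that all $r$-dependence enters polynomially against the decaying factor $4^{-r}$ (the $3^{-r}$ leaving room to absorb polynomial factors in $r$ uniformly), so that the tail $\sum_{r > R}$ is negligible and term-by-term estimates suffice.

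Finally, for the moments I would write $\E D_n = \sum_{r\ge 1} r\,\P(D_n=r)$ and $\E D_n(D_n-1)\ \text{or}\ \E(2D_n-1)$ via $\sum_{r\ge 1}\P(D_n \ge r)$-type resummations, substitute the expansion~\eqref{eq:prob-age}, and interchange the (finite, by Proposition~\ref{prop:age-bounds}, $D_n \le \lfloor n/2\rfloor$) sum with the asymptotic expansion. Because each coefficient in~\eqref{eq:prob-age} is summable against $r$ and $r^2$ respectively — the denominators $(4^r+2)^2$ and $(4^r+2)^4$ dominate — the sums defining $c_0, c_1, c_2, c_3$ converge, and the uniform error term makes the interchange legitimate with an $O(n^{-2})$ remainder after multiplying by $r$ and resumming against $3^{-r}$. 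The identity $\V D_n = \E D_n^2 - (\E D_n)^2$ then produces~\eqref{eq:age-variance} with $c_2 = (\sum_r (2r-1)\,[\text{leading coeff}]) - c_0^2$ and $c_3$ the analogous combination minus $2c_0 c_1$, matching the stated formulas. The main obstacle is the first part: performing the double singularity expansion of the $r$-parametrised generating functions and the division by $C_{n-2}$ while keeping the $r$-dependence in the explicit closed form displayed in~\eqref{eq:prob-age}, and, more delicately, proving that the error is uniform in $r$ rather than merely holding for each fixed $r$.
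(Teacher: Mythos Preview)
Your proposal is correct and follows essentially the same route as the paper: singularity analysis at $z=1/4$ of the generating functions from Corollary~\ref{cor:leq-eq-gf}, division by $C_{n-2}$, and then summation via $\sum_{r\ge 1}\P(D_n\ge r)$ and $\sum_{r\ge 1}(2r-1)\P(D_n\ge r)$ for the moments. The one simplification you miss is that the paper passes through the complementary function $F_r^{\ge}(z)=S(z,z)-F_{r-1}^{\le}(z)$, which collapses to the clean closed form $z(1+T)\,\dfrac{T^{2r-1}}{1+T^{2r-1}}$ and makes both the local expansion at $z=1/4$ and the uniform-in-$r$ error estimate (the $r^4/3^r$ and $r^5/3^r$ bounds) much more transparent than working with $F_r^{\le}-F_{r-1}^{\le}$ directly; also note your algebraic slip $\tfrac{z}{1-T^2}=\tfrac{T}{1-T^2}$ should read $\tfrac{T}{1+T}$.
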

\begin{proof}
  For the sake of convenience we set $F_{r}^{\leq}(z) := F_{r}^{\leq}(z,z)$, where
  $F_{r}^{\leq}(z,t)$ is given in~\eqref{eq:leq-gf}. This univariate generating
  function now enumerates Catalan--Stanley trees of age $\leq r$ with respect to the tree
  size.

  We begin by observing that $F_{r}^{\geq}(z)$, the generating function enumerating
  Catalan--Stanley trees of age $\geq r$ with respect to the tree size is given by
  \begin{equation}\label{eq:geq-gf}
    F_{r}^{\geq}(z) = S(z,z) - F_{r-1}^{\leq}(z) = z(1 + T) - \frac{z}{1 - z\frac{1 -
        T^{2r-2}}{1 - T^{2}}} = z(1 + T) \frac{T^{2r-1}}{1 + T^{2r-1}},
  \end{equation}
  where the last equation follows after some elementary manipulations and by using~\eqref{eq:simplify-T}.

  Now let $f_{n,r} := [z^{n}]F_{r}^{\geq}(z)$ denote the number of Catalan--Stanley trees
  of size $n$ and age $\geq r$. As we consider all Catalan--Stanley trees of size $n$ to be
  equally likely, we find
  \[ \P(D_{n} = r) = \P(D_{n} \geq r) - \P(D_{n} \geq r+1) = \frac{f_{n,r} -
      f_{n,r+1}}{C_{n-2}}. \]
  We use singularity analysis (see~\cite{Flajolet-Odlyzko:1990:singul}
  and~\cite[Chapter VI]{Flajolet-Sedgewick:ta:analy}) in order to obtain an asymptotic
  expansion for $f_{n,r}$. To do so, we first observe that $z=1/4$ is the dominant
  singularity of $T$ and thus also of $F_{r}^{\geq}(z)$. We then
  consider $z$ to be in some $\Delta$-domain at
  $1/4$ (see~\cite[Definition VI.1]{Flajolet-Sedgewick:ta:analy}). The task of expanding
  $F_{r}^{\geq}(z)$ for $z\to 1/4$ now largely consists of handling the term
  $\frac{T^{2r-1}}{1 + T^{2r-1}}$. Observe that we can write
  \begin{align*}
    \frac{T^{2r-1}}{1 + T^{2r-1}} & = \frac{1}{1 + T^{1-2r}}
                                    = \frac{1}{1 + 2^{2r-1} (1 - \sqrt{1 - 4z}\,)^{1-2r}},\\
                                  & = \frac{1}{(1 + 2^{2r-1})\big(1 + \frac{2^{2r-1}}{1 +
                                    2^{2r-1}} \sum_{j\geq 1} \binom{2r+j-2}{j} (1-4z)^{j/2}\big)}
  \end{align*}
  which results in
  \begin{multline*}
    \frac{T^{2r-1}}{1 + T^{2r-1}} = \frac{2}{4^{r} + 2} - \frac{2\cdot 4^{r}
      (2r-1)}{(4^{r} + 2)^{2}} (1 - 4z)^{1/2} \\
    + \frac{2\cdot 4^{r}\, (4^{r}(r-1) - 2r)(2r-1)}
    {(4^{r} + 2)^{3}} (1 - 4z)\\
    - \frac{2\cdot 4^{r}(16^{r} (2r^{2} - 5r + 3) - 4^{r+2}(r^{2} - r) + 8r^{2} + 4r)
      (2r-1)}{3 (4^{r}+2)^{4}} (1 - 4z)^{3/2}
    + O\Bigl(\frac{r^{4}}{3^{r}}(1 -
    4z)^{2}\Bigr),
  \end{multline*}
  where the $O$-term holds uniformly in $r$. Multiplying this expansion with the expansion
  of $z (1+T)$ yields the expansion
  \begin{multline*}
    F_{r}^{\geq}(z) = \frac{3}{4(4^{r} + 2)} - \frac{4^{r}(3r-1) + 1}{2 (4^{r} + 2)^{2}}
    (1 - 4z)^{1/2} \\ + \frac{16^{r} (6r^{2} - 7r - 1) - 2\cdot 4^{r} (6 r^{2} - 5r + 7) -
      12}{4 (4^{r} + 2)^{3}} (1 - 4z)\\
    - \frac{64^{r}(2r^{3} - 5r^{2} + r) - 2\cdot 16^{r} (8r^{3} - 12r^{2} + 11r -2)
    + 4^{r+1} (2r^{3} - r^{2} - 3r) - 4}{2 (4^{r} + 2)^{4}} (1 - 4z)^{3/2}
    \\+ O\Bigl(\frac{r^{4}}{3^{r}}(1-4z)^{2}\Bigr).
  \end{multline*}
  By means of singularity analysis we extract the $n$th coefficient and find
  \begin{multline*}
    f_{n,r} = \frac{4^{r} (3r-1) + 1}{4\sqrt{\pi}\, (4^{r} + 2)^{2} } 4^{n} n^{-3/2}
     - \Big(\frac{3\cdot 64^{r} (8r^{3} - 20r^{2} + r + 1)}{32\sqrt{\pi}\, (4^{r} + 2)^{4}} \\
     - \frac{3\cdot 16^{r}(64r^{3} - 96r^{2} +
       100r - 19) - 12\cdot 4^{r} (8r^{3} - 4r^{2} - 15r) + 60}{32\sqrt{\pi}\, (4^{r} +
       2)^{4}}\Big) 4^{n} n^{-5/2}
     \\ + O\Big(\frac{r^{5}}{3^{r}} 4^{n} n^{-7/2}\Big).
  \end{multline*}
  Computing the difference $f_{n,r} - f_{n,r+1}$ and dividing by the Catalan number
  $C_{n-2}$ then yields the expression for $\P(D_{n} = r)$ given in~\eqref{eq:prob-age}.

  The expected value can then be computed with the help of the well-known formula
  \[ \E D_{n} = \sum_{r\geq 1} \P(D_{n} \geq r), \]
  which proves~\eqref{eq:age-expectation}. Finally, the variance can be obtained from $\V
  D_{n} = \E(D_{n}^{2}) - (\E D_{n})^{2}$, where
  \[ \E(D_{n}^{2}) = \sum_{r\geq 1} r^{2} \P(D_{n} = r) = \sum_{r\geq 1} (2r - 1) \P(D_{n}
    \geq r), \]
  which proves~\eqref{eq:age-variance}.
\end{proof}

In addition to the asymptotic expansions given in Theorem~\ref{thm:age-analysis} we can
also determine an exact formula for the expected value $\E D_{n}$. The key tools in this
context are Cauchy's integral formula as well as the substitution $z =
\frac{u}{(1+u)^{2}}$.

\begin{proposition}\label{prop:age-exact}
  Let $n\in \Z_{\geq 2}$. The expected age of the Catalan--Stanley trees of size $n$ is
  given by
  \begin{equation}\label{eq:age-exact}
    \E D_{n} = \frac{1}{C_{n-2}} \sum_{k\geq 1} (-1)^{k+1} \sigma_{0}^{\mathrm{odd}}(k)
    \Bigg(\binom{2n- 4 - k}{n-3} + \binom{2n-4-k}{n-2} - 2\binom{2n-4-k}{n-1}\Bigg),
  \end{equation}
  where $\sigma_{0}^{\mathrm{odd}}(k)$ denotes the number of odd divisors of $k$.
\end{proposition}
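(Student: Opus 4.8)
The plan is to start from the exact generating function identity
\[
\E D_{n} = \frac{1}{C_{n-2}} [z^{n}] \sum_{r\geq 1} F_{r}^{\geq}(z)
= \frac{1}{C_{n-2}} [z^{n}] \sum_{r\geq 1} z(1+T) \frac{T^{2r-1}}{1 + T^{2r-1}},
\]
which follows from $\E D_{n} = \sum_{r\geq 1}\P(D_n \geq r)$ together with the closed form for $F_{r}^{\geq}(z)$ in~\eqref{eq:geq-gf}. The first step is therefore to simplify the inner sum $\sum_{r\geq 1} \frac{T^{2r-1}}{1+T^{2r-1}}$. Writing $\frac{T^{2r-1}}{1+T^{2r-1}} = \sum_{m\geq 1} (-1)^{m+1} T^{m(2r-1)}$ (valid as a formal/convergent expansion since $0 < T < 1/2$ near the relevant range), and summing the resulting geometric series over $r\geq 1$, I get
\[
\sum_{r\geq 1} \frac{T^{2r-1}}{1+T^{2r-1}}
= \sum_{m\geq 1} (-1)^{m+1} \frac{T^{m}}{1 - T^{2m}}.
\]
Next I expand $\frac{1}{1-T^{2m}} = \sum_{j\geq 0} T^{2mj}$ and collect powers of $T$: the coefficient of $T^{k}$ is $\sum_{m \mid k,\ k/m \text{ odd}} (-1)^{m+1}$. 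A short divisor-counting argument (splitting $k = 2^{a} b$ with $b$ odd; the condition $k/m$ odd forces $2^{a}\mid m$, and then $(-1)^{m+1}$ contributes $+1$ when $a=0$ and, for $a\geq 1$, the signs over the $d\mid b$ choices all equal $-(-1)=$ well, $m=2^a d$ is even so $(-1)^{m+1}=-1$, giving $-\sigma_0(b)$; reconciling the two cases one finds the net coefficient is $(-1)^{?}\sigma_0^{\mathrm{odd}}(k)$ up to the sign $(-1)^{k+1}$ once everything is reassembled). The upshot I am aiming at is
\[
z(1+T)\sum_{r\geq 1} \frac{T^{2r-1}}{1+T^{2r-1}}
= \sum_{k\geq 1} (-1)^{k+1} \sigma_0^{\mathrm{odd}}(k)\, z(1+T) T^{k},
\]
so the problem reduces to extracting $[z^{n}] z(1+T) T^{k}$ for each $k$.

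For the coefficient extraction I would use Cauchy's integral formula combined with the substitution $z = \frac{u}{(1+u)^{2}}$ suggested in the statement. Under this substitution $T(z) = \frac{u}{1+u}$ (one checks $z + T^2 = T$), and $1+T = \frac{1+2u}{1+u}$, while $dz = \frac{1-u}{(1+u)^3}\,du$. Hence
\[
[z^{n}]\, z(1+T)\,T^{k}
= \frac{1}{2\pi i}\oint \frac{z(1+T)T^{k}}{z^{n+1}}\,dz
= \frac{1}{2\pi i}\oint \frac{u^{k+1}(1+2u)}{(1+u)^{k+2}}\cdot\frac{(1+u)^{2n+2}}{u^{n+1}}\cdot\frac{1-u}{(1+u)^{3}}\,du,
\]
which collapses to $[u^{n-1}] (1+2u)(1-u)(1+u)^{2n - k - 3}$. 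Expanding $(1+2u)(1-u) = 1 + u - 2u^{2}$ and reading off three binomial coefficients from $(1+u)^{2n-k-3}$ gives
\[
[z^{n}]\, z(1+T)T^{k}
= \binom{2n-k-3}{n-1} + \binom{2n-k-3}{n-2} - 2\binom{2n-k-3}{n-3}.
\]
This is the same triple of binomials as in~\eqref{eq:age-exact} after shifting the summation index $k \mapsto k-1$ (or, equivalently, I should double-check the precise index normalization against~\eqref{eq:geq-gf} — there is a factor $z$ and a $T^{2r-1}$ versus $T^{k}$ bookkeeping offset that must be tracked carefully); up to that relabeling, summing over $k\geq 1$ against $(-1)^{k+1}\sigma_0^{\mathrm{odd}}(k)$ and dividing by $C_{n-2}$ yields~\eqref{eq:age-exact}.

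The main obstacle I anticipate is twofold and entirely bookkeeping in nature rather than conceptual. First, the divisor identity: one must correctly show that $\sum_{m\mid k,\ k/m\ \mathrm{odd}} (-1)^{m+1}$ equals $(-1)^{k+1}\sigma_0^{\mathrm{odd}}(k)$ — this needs a careful case split on the $2$-adic valuation of $k$ (when $k$ is odd, every divisor $m$ is odd so the sum is $\sigma_0(k) = \sigma_0^{\mathrm{odd}}(k)$ with sign $+1 = (-1)^{k+1}$; when $k$ is even, $k/m$ odd forces $m$ even, so every term is $-1$ and the count of such $m$ is exactly $\sigma_0^{\mathrm{odd}}(k)$, matching the sign $-1 = (-1)^{k+1}$). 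Second, keeping the index offsets consistent: $F_{r}^{\geq}$ involves $T^{2r-1}$, the $(-1)^{m+1}$-expansion introduces another layer, and the substitution produces $(1+u)^{2n-k-3}$ whose binomials must line up precisely with $\binom{2n-4-k}{\cdot}$ in the claim — so a clean choice of which quantity is called $k$ throughout is essential. I would also briefly note that the interchange of the (finite, for fixed $n$) sum over $r$ with coefficient extraction is justified because $[z^{n}]F_{r}^{\geq}(z) = 0$ for $r > n/2$ by Proposition~\ref{prop:age-bounds}, so there are no convergence issues.
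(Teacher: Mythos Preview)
Your approach is essentially the paper's: both use Cauchy's formula with the substitution $z=u/(1+u)^2$, both expand $1/(1+T^{2r-1})$ geometrically, and both arrive at the same divisor identity (the paper's $\vartheta(k)=\sum_{j,r:\,j(2r-1)=k}(-1)^{j-1}$ is exactly your $\sum_{m\mid k,\,k/m\ \mathrm{odd}}(-1)^{m+1}$). The only difference is order of operations—the paper extracts $[z^n]F_r^{\geq}(z)$ for each $r$ and then sums over $r$, whereas you sum over $r$ in the $T$-series first and then extract $[z^n]\,z(1+T)T^k$; both are equally valid.

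Your bookkeeping worry is well placed, because the contour computation contains two slips. First, $z(1+T)T^k=\dfrac{u^{k+1}(1+2u)}{(1+u)^{k+3}}$, not $(1+u)^{k+2}$ in the denominator. Second, after multiplying by $(1+u)^{2n+2}/u^{n+1}$ and $(1-u)/(1+u)^3$ the integrand becomes $(1+2u)(1-u)(1+u)^{2n-k-4}/u^{n-k}$, so you are extracting $[u^{n-k-1}]$, not $[u^{n-1}]$. With these corrections, expanding $(1+2u)(1-u)=1+u-2u^2$ and using the symmetry $\binom{2n-k-4}{n-k-1}=\binom{2n-k-4}{n-3}$ (and similarly for the other two terms) gives directly
\[
[z^n]\,z(1+T)T^k=\binom{2n-k-4}{n-3}+\binom{2n-k-4}{n-2}-2\binom{2n-k-4}{n-1},
\]
matching~\eqref{eq:age-exact} on the nose---no shift $k\mapsto k-1$ is needed, and the sign on the $-2$ term attaches to $\binom{\cdot}{n-1}$, not $\binom{\cdot}{n-3}$.
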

\begin{proof}
  We begin by explicitly extracting the coefficient $[z^{n}] F_{r}^{\geq}(z)$. The
  expected value can then be obtained by summation over $r$ and division by $C_{n-2}$.

  With the help of the substitution $z = \frac{u}{(1+u)^{2}}$ we can bring
  $F_{r}^{\geq}(z)$ into the more suitable form
  \[  F_{r}^{\geq}(z) = \frac{(1+2u) u^{2r}}{(1+u)^{3} (u^{2r-1} + (1+u)^{2r-1})}. \]
  We extract the coefficient of $z^{n}$ now by means of Cauchy's integral formula. Let
  $\gamma$ be a small contour winding around the origin once. Then we have
  \begin{align}
    [z^{n}] F_{r}^{\geq}(z)
    & = \frac{1}{2\pi i} \oint_{\gamma} \frac{F_{r}^{\geq}(z)}{z^{n+1}}~dz
    = \frac{1}{2\pi i} \oint_{\tilde\gamma} \frac{(1+u)^{2n+2}}{u^{n+1}} \frac{(1+2u)
      u^{2r}}{(1+u)^{3} (u^{2r-1} + (1+u)^{2r-1})} \frac{1-u}{(1+u)^{3}}~du \notag\\
    & = [u^{n-2r}] (1+2u)(1-u) (1+u)^{2n-2r-3} \frac{1}{1 + (\frac{u}{1+u})^{2r-1}} \notag\\
    & = [u^{n-2r}] (1 + u - 2u^{2}) \sum_{j\geq 1} (-1)^{j-1} u^{(2r-1)(j-1)}
      (1+u)^{2n-4-j(2r-1)}\notag\\
    & = \begin{multlined}[t]
      \sum_{j\geq 1} (-1)^{j-1} \Bigg(\binom{2n-4 - j(2r-1)}{n-3} +
      \binom{2n-4 - j(2r-1)}{n-2} \\ - 2 \binom{2n-4 - j(2r-1)}{n-1}\Bigg),
      \end{multlined} \label{eq:age-exact:r}
  \end{align}
  where $\tilde\gamma$, the integration contour of the second integral, is the
  transformation of $\gamma$ under the transformation $z = u/(1+u)^{2}$ and is also a
  small contour winding around the origin once.

  Now consider the auxiliary sum
  \[ \vartheta(k) := \sum_{\substack{j,r\geq 1 \\ j(2r-1) = k}} (-1)^{j-1}.  \]
  It is easy to see by distinguishing between even and odd $k$ that with the help of
  $\sigma_{0}^{\mathrm{odd}}(k)$,
  $\vartheta(k)$ can be written as $\vartheta(k) = (-1)^{k-1}
  \sigma_{0}^{\mathrm{odd}}(k)$.

  Summing the expression from~\eqref{eq:age-exact:r} over $r\geq 1$, simplifying the
  resulting double sum by means of the auxiliary sum $\vartheta$, and finally dividing by
  $C_{n-2}$ then proves~\eqref{eq:age-exact}.
\end{proof}

\section{Analysis of Ancestors}\label{sec:ancestors}

In this section we focus on characterizing the effect of the (repeatedly applied)
reduction $\rho$ on a random Catalan--Stanley tree of size $n$. We are particularly
interested in studying the size of the reduced tree. In the light of the fact that all
Catalan--Stanley trees can be grown from $\innernode$ by means of the growth process
induced by $\rho$, we can think of the $r$th reduction of some tree $\tau$ as the $r$th
ancestor of $\tau$.

In order to formally conduct this analysis, we consider the random variable $X_{n,r}$
modeling the size of the $r$th ancestor of some tree of size $n$, where all Catalan--Stanley
trees of size $n$ are equally likely.

Similar to our approach in Proposition~\ref{prop:age-bounds} we can determine precise
bounds for $X_{n,r}$ as well.

\begin{proposition}\label{prop:ancestor-bounds}
  Let $n\in \Z_{\geq 2}$ and $r\in \Z_{\geq 1}$. Then the bounds
  \begin{equation}\label{eq:ancestor-bounds}
    1 \leq X_{n,r} \leq n - 2(r-1) - 1
  \end{equation}
  hold for $r \leq \lfloor n/2 \rfloor$ and are sharp, i.e.\ there are trees $\tau$,
  $\tau'\in \mathcal{S}$ of size $n\geq 2$ such that $X_{n,r}(\tau) = 1$ and
  $X_{n,r}(\tau') = n - 2(r-1) - 1$. For $r > \lfloor n/2 \rfloor$ the variable $X_{n,r}$ is
  deterministic with $X_{n,r} = 1$.
\end{proposition}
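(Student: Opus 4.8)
The plan is to argue directly from the combinatorial meaning of the reduction $\rho$, mirroring the structure of Proposition~\ref{prop:age-bounds}. The lower bound $X_{n,r}\geq 1$ is immediate: the reduced tree is always a Catalan--Stanley tree, and $\innernode$ is the smallest such tree, so any iterate $\rho^{r}(\tau)$ has size at least $1$. Moreover, once $\rho$ reaches $\innernode$ it stays there by definition, so $X_{n,r}=1$ is \emph{attained} for every $r$ by any tree of age $\leq r$ (e.g.\ for $r\geq 1$ the tree with $n-1$ children attached to the root has age $1$, hence $X_{n,r}=1$); this simultaneously establishes sharpness of the lower bound and the last claim that $X_{n,r}$ is deterministically $1$ once $r>\lfloor n/2\rfloor$, since by Proposition~\ref{prop:age-bounds} every tree of size $n$ has age at most $\lfloor n/2\rfloor$.

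For the upper bound I would reuse the key observation from the proof of Proposition~\ref{prop:age-bounds}: a single application of $\rho$ to a tree of size $\geq 3$ removes at least two nodes, while on the (unique) tree of size $2$ it removes exactly one node. Starting from $\tau$ of size $n\geq 2$, the first $r-1$ reductions each remove at least two nodes \emph{unless} some intermediate tree has size $\leq 2$; but if an intermediate tree already has size $1$ or $2$, then $\rho^{r}(\tau)$ has size $1$ or $2$, which is certainly $\leq n-2(r-1)-1$ in the relevant range. In the generic case $\abs{\rho^{r}(\tau)} \leq \abs{\tau} - 2(r-1) = n - 2(r-1)$; I would then observe that the size of a Catalan--Stanley tree of age $\geq 1$ is at least $2$, and more precisely that $\rho^{r-1}(\tau)$ has size $\geq 3$ whenever $\rho^{r}(\tau)$ has size $\geq 2$, so that the last step before reaching size $\geq 2$ again removes at least two nodes, yielding the slightly stronger $n - 2(r-1) - 1$. (This is exactly the bookkeeping already carried out for the age.)

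For sharpness of the upper bound I would exhibit an explicit family: take the chain of length $2(r-1)$ and attach to its bottom leaf a copy of the tree with $n-2(r-1)-1$ extra children at \ldots\ more simply, take the Catalan--Stanley tree consisting of a chain of $2(r-1)$ edges followed by an arbitrary Catalan--Stanley tree of size $n-2(r-1)$ of age exactly $1$ glued on as the continuation of the rightmost path. Applying $\rho$ peels off exactly one chain-pair per step for the first $r-1$ steps (removing exactly two nodes each time, since only the rightmost spine is affected), so after $r-1$ reductions we are left with a tree of size $n-2(r-1)$, and one further reduction (the $r$th) sends it to a tree of size $n-2(r-1)-1$; one checks it lands on a legitimate Catalan--Stanley tree. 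The main obstacle here is being careful about the two different regimes in the per-step node count near the bottom of the chain — whether a given step removes exactly two nodes or exactly one — and making the glued family genuinely a Catalan--Stanley tree (odd rightmost-leaf distances) at every intermediate stage. Once the extremal families are pinned down, sharpness of both bounds follows by direct computation of $\abs{\rho^{r}(\tau)}$ along the chain.
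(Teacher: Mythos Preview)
Your approach mirrors the paper's closely: the lower bound, its sharpness, and the $r>\lfloor n/2\rfloor$ case are handled the same way (the star tree and Proposition~\ref{prop:age-bounds}), and the upper-bound validity is the same node-counting argument, just written out more tentatively than the paper's one-line appeal to Proposition~\ref{prop:age-bounds}.

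The sharpness of the upper bound is where both you and the paper stumble. Your construction does not behave as claimed: with a chain of $2(r-1)$ edges above an age-$1$ tree, the rightmost leaf sits at depth $2r-1$, so the \emph{first} application of $\rho$ deletes all subtrees of the node at depth $2r-3$, wiping out the entire glued piece together with one chain node---not just a ``chain-pair''. More fundamentally, no construction can hit $X_{n,r}=n-2(r-1)-1$ except when $n=2r$: if $\abs{\rho^{r}(\tau)}\geq 2$ then every one of the $r$ reductions acts on a tree of size $\geq 3$ and removes at least two nodes, giving $\abs{\rho^{r}(\tau)}\leq n-2r$; and if $\abs{\rho^{r}(\tau)}=1$ then equality with $n-2r+1$ forces $n=2r$. (Already for $n=4$, $r=1$ the two Catalan--Stanley trees reduce to sizes $1$ and $2$, never $3$.) The paper's own sharpness witness---trees whose $(r-1)$th ancestor is the size-$2$ chain---has the same defect: such trees have $X_{n,r}=1$, which equals $n-2(r-1)-1$ only when $n=2r$. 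So the obstacle you flagged is genuine, and the paper does not resolve it either; the correct sharp upper bound for $r\leq\lfloor n/2\rfloor$ is $\max(1,\,n-2r)$.
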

\begin{proof}
  Assume that $r \leq \lfloor n/2\rfloor$. The lower bound is obvious as trees cannot
  reduce further than to $\innernode$, and as the first ancestor of the tree with $n-1$
  children attached to the root already is $\innernode$ the lower bound is valid and
  sharp.

  For the upper bound we follow the same argumentation as in the proof of
  Proposition~\ref{prop:age-bounds} to arrive at
  \[ 1 \leq \abs{\rho^{r}(\tau)} \leq \abs{\tau} - 2(r-1) - 1  = n - 2r + 1  \]
  for some Catalan--Stanley tree of size $n$, which proves that the upper bound is
  valid. Any tree $\tau$ of size $n$ having the chain of length $2$ as its $(r-1)$th ancestor
  satisfies $X_{n,r}(\tau) = n - 2(r-1) - 1$ and thus proves that the upper bound is
  sharp. This proves~\eqref{eq:ancestor-bounds}.

  In the case of $r > \lfloor n/2\rfloor$ we observe that as the $\lfloor n/2\rfloor$th
  ancestor of any Catalan--Stanley tree of size $n$ already is certain to be $\innernode$
  by Proposition~\ref{prop:age-bounds}, the $r$th ancestor is $\innernode$ as well.
\end{proof}

With the generating function $G_{r}(z,v)$ enumerating Catalan--Stanley trees with respect
to their size (marked by $n$) and the size of their $r$th ancestor (marked by $v$) from
Corollary~\ref{cor:size-change-gf} we can write the probability generating function of
$X_{n,r}$ as
\[ \E v^{X_{n,r}} = \frac{1}{C_{n-2}} [z^{n}]G_{r}(z,v). \]
This allows us to extract parameters like the expected size of the $r$th ancestor and the
corresponding variance.

\begin{theorem}\label{thm:ancestor-analysis}
  Let $r\in \Z_{\geq 0}$ be fixed and consider $n\to\infty$. Then the expected value and
  the variance of the random variable $X_{n,r}$ modeling the size of the $r$th ancestor
  of a (uniformly random) Catalan--Stanley tree of size $n$ are given by the asymptotic
  expansions
  \begin{equation}\label{eq:reduced-size:expectation}
    \E X_{n,r} = \frac{1}{4^{r}} n + \frac{2\cdot 4^{r} - 2r^{2} + r - 2}
    {2\cdot 4^{r}} +
    \frac{(2r+1)(2r-1)(r-3)r}{2\cdot 4^{r+1}} n^{-1} + O(n^{-3/2}),
  \end{equation}
  \begin{multline}\label{eq:reduced-size:variance}
    \V X_{n,r} = \frac{(2^{r} +1)(2^{r} - 1)}{16^{r}} n^{2} - \frac{\sqrt{\pi}
      (4^{r}(3r+1) - 1)}{3\cdot 16^{r}} n^{3/2} \\ + \frac{18\cdot 4^{r} r^{2} + 3\cdot 4^{r}
    r  - 38\cdot 4^{r} + 36r^{2} - 42r + 38 }{18\cdot 16^{r}}n \\ + \frac{5\sqrt{\pi}\,
    (4^{r}(3r+1) - 1)}{8\cdot 16^{r}} n^{1/2} + O(1).
  \end{multline}
\end{theorem}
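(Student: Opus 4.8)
The plan is to read off everything from the generating function $G_{r}(z,v)$ of Corollary~\ref{cor:size-change-gf} together with the identity $\E v^{X_{n,r}} = \frac{1}{C_{n-2}}[z^{n}]G_{r}(z,v)$. Since factorial moments are the cleanest to extract, I would introduce
\[ M_{1}(z) := \left.\frac{\partial}{\partial v}G_{r}(z,v)\right|_{v=1}, \qquad M_{2}(z) := \left.\frac{\partial^{2}}{\partial v^{2}}G_{r}(z,v)\right|_{v=1}, \]
so that $\E X_{n,r} = \frac{[z^{n}]M_{1}}{C_{n-2}}$, $\E X_{n,r}(X_{n,r}-1) = \frac{[z^{n}]M_{2}}{C_{n-2}}$, and hence $\V X_{n,r} = \frac{[z^{n}]M_{2}+[z^{n}]M_{1}}{C_{n-2}} - \bigl(\frac{[z^{n}]M_{1}}{C_{n-2}}\bigr)^{2}$. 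Setting $v=1$ after differentiating collapses the arguments so that every occurrence of $T$ is $T=T(z)$ again (and as a check $G_{r}(z,1)=S(z,z)=z(1+T)$, confirming the normalization $[z^{n}]G_{r}(z,1)=C_{n-2}$). Repeatedly using $z=T(1-T)$, equivalently $1-T^{2}=(1-T)(1+T)$ and $z/(1-T)=T$ from~\eqref{eq:simplify-T}, then lets me write $M_{1}$ and $M_{2}$ as explicit rational functions of $T$.

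Next I would apply singularity analysis at the dominant singularity $z=1/4$, where $T=1/2$. Writing $Y:=\sqrt{1-4z}$, so that $T=\tfrac{1}{2}(1-Y)$, I expand $M_{1}$ and $M_{2}$ as Puiseux series in $Y$ (equivalently in $(1-4z)^{1/2}$). Each $v$-differentiation raises the order of the singularity, so $M_{1}$ carries a $(1-4z)^{-1/2}$ term and $M_{2}$ a $(1-4z)^{-3/2}$ term, matching the fact that $\E X_{n,r}$ is of order $n$ and $\E X_{n,r}^{2}$ of order $n^{2}$. Several orders are needed: for $\E X_{n,r}$ to precision $O(n^{-3/2})$ I would carry $M_{1}$ through its $(1-4z)^{5/2}$ term, and for $\V X_{n,r}$ to precision $O(1)$ I would carry $M_{2}$ through its $(1-4z)^{1/2}$ term, keeping track of which contributions are integer powers of $(1-4z)$ — after coefficient extraction these produce the $\sqrt{\pi}$-multiples appearing in~\eqref{eq:reduced-size:variance}, whereas half-integer powers give purely rational coefficients. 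The transfer theorem (\cite{Flajolet-Odlyzko:1990:singul}, \cite[Chapter VI]{Flajolet-Sedgewick:ta:analy}), applied termwise over a $\Delta$-domain at $1/4$, then yields asymptotic expansions of $[z^{n}]M_{1}$ and $[z^{n}]M_{2}$ in powers of $4^{n}$ times powers of $n$.

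Finally I would divide by $C_{n-2}=\frac{1}{n-1}\binom{2n-4}{n-2}$, using its Stirling expansion $C_{n-2}=\frac{4^{n}}{16\sqrt{\pi}}n^{-3/2}\bigl(1-\tfrac{15}{8}n^{-1}+O(n^{-2})\bigr)$, and assemble first $\E X_{n,r}$, then $\V X_{n,r}=\E X_{n,r}(X_{n,r}-1)+\E X_{n,r}-(\E X_{n,r})^{2}$; in this last combination the would‑be $n^{5/2}$ term and the bulk of the $n^{2}$ contribution must cancel between the $M_{2}$–part and $(\E X_{n,r})^{2}$, leaving the stated leading coefficient $\frac{(2^{r}+1)(2^{r}-1)}{16^{r}}n^{2}$. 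The main difficulty is not conceptual but the sheer volume of careful asymptotic bookkeeping: the variance requires the leading singular term of $M_{2}$ tracked down through four further orders together with the square of a three‑term expansion of $\E X_{n,r}$, with delicate cancellations. I would therefore carry out the algebra using SageMath's asymptotic expansions module as indicated in the introduction, and validate the output against Proposition~\ref{prop:ancestor-bounds} and against the trivial case $r=0$, where $X_{n,0}=n$ deterministically and formulas~\eqref{eq:reduced-size:expectation}–\eqref{eq:reduced-size:variance} must reduce to $\E X_{n,0}=n$ and $\V X_{n,0}=0$.
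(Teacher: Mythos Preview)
Your proposal is correct and follows essentially the same route as the paper: differentiate $G_{r}(z,v)$ once and twice with respect to $v$, set $v=1$, expand around the dominant singularity $z=1/4$ in powers of $(1-4z)^{1/2}$ via singularity analysis (with SageMath handling the algebra), divide by $C_{n-2}$, and assemble the variance from the first two factorial moments. The paper records the intermediate singular expansions of $\partial_{v}G_{r}|_{v=1}$ and $\partial_{v}^{2}G_{r}|_{v=1}$ explicitly but otherwise proceeds exactly as you describe.
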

\begin{proof}
  The strategy behind this proof is to determine the first and second factorial moment of
  $X_{n,r}$ by extracting the coefficient of $z^{n}$ in the derivatives
  $\frac{\partial^{d}}{\partial v^{d}} G_{r}(z,v)|_{v=1}$ for $d\in\{1,2\}$ and normalizing the
  result by dividing by $C_{n-2}$.

  We begin with the expected value. With the help of SageMath~\cite{SageMath:2017:7.6} we
  find for $z\to 1/4$
  \begin{multline*}
    \frac{\partial}{\partial v} G_{r}(z,v)|_{v=1} = \frac{1}{4^{r+2}} (1 - 4z)^{-1/2} +
    \frac{3\cdot 4^{r} -r}{2\cdot 4^{r+1}} - \frac{2\cdot 4^{r} - 2r^{2} + r + 2}{4^{r+2}}
    (1 - 4z)^{1/2} \\
    - \frac{9\cdot 4^{r} + 2r^{3} - 3r^{2} - 5r}{6\cdot 4^{r+1}} (1-4z)
    + O((1-4z)^{3/2}),
  \end{multline*}
  where the $O$-constant depends implicitly on $r$. Extracting the coefficient of $z^{n}$
  and dividing by $C_{n-2}$ yields the expansion given
  in~\eqref{eq:reduced-size:expectation}.

  Following the same approach for the second derivative yields the expansion
  \begin{multline*}
    \frac{\partial^{2}}{\partial v^{2}} G_{r}(z,v)|_{v=1} =
    \frac{1}{2\cdot 4^{r+2}} (1 - 4z)^{-3/2}
    - \frac{4^{r} (3r+1) - 1}{3\cdot 16^{r+1}} (1 - 4z)^{-1}
    \\+ \frac{4^{r}(18r^{2} + 3r +7) - 24r + 2}{18\cdot 16^{r+1}} (1 - 4z)^{-1/2}
    + O(1),
  \end{multline*}
  such that after applying singularity analysis and division by $C_{n-2}$ we obtain the
  expansion
  \begin{multline*}
    \E X_{n,r}^{\underline{2}} = \frac{1}{4^{r}} n^{2}
    - \frac{\sqrt{\pi}\, (4^{r} (3r+1) - 1)}{3\cdot 16^{r}} n^{3/2}
    + \frac{4^{r} (18r^{2} + 3r - 20) - 24r + 2}{18\cdot 16^{r}} n
    \\ + \frac{5\sqrt{\pi}\, (4^{r} (3r+1) - 1)}{8\cdot 16^{r}} n^{1/2}
    + O(1)
  \end{multline*}
  for the second factorial moment $\E X_{n,r}^{\underline{2}}$. Applying the well-known
  formula
  \[ \V X_{n,r} = \E X_{n,r}^{\underline{2}} + \E X_{n,r} - (\E X_{n,r})^{2}  \]
  then leads to the asymptotic expansion for the variance given
  in~\eqref{eq:reduced-size:variance} and thus proves the statement.
\end{proof}

Besides the asymptotic expansion given in Theorem~\ref{thm:ancestor-analysis}, we are also
interested in finding an exact formula for the expected value $\E X_{n,r}$. We can do so
by means of Cauchy's integral formula.

\begin{proposition}\label{prop:ancestor-exact}
  Let $n$, $r\in \Z_{\geq 1}$. Then the expected size of the $r$th ancestor of a random
  Catalan--Stanley tree of size $n$ is given by
  \begin{equation}\label{eq:ancestor:expectation-exact}
    \E X_{n,r} = \frac{1}{C_{n-2}} \binom{2n-2r-4}{n-2} + 1.
  \end{equation}
\end{proposition}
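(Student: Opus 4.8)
The plan is to start from the generating function $G_r(z,v)$ of Corollary~\ref{cor:size-change-gf} and extract the expected value of $X_{n,r}$ exactly rather than asymptotically. Since $\E X_{n,r} = \frac{1}{C_{n-2}}[z^n]\frac{\partial}{\partial v}G_r(z,v)|_{v=1}$, the first step is to compute $\frac{\partial}{\partial v}G_r(z,v)|_{v=1}$ in closed form. Note that $G_r(z,1) = S(z,z) = z(1+T)$ regardless of $r$ (the size is unchanged when $v=1$), which gives a useful sanity check; differentiating~\eqref{eq:size-change-gf} with respect to $v$ and setting $v=1$ should, after using the functional equation $z + T^2 = T$ to simplify, collapse the rational expression in $T$ and $T^{2r}$ into something reasonably compact.

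The second step is the substitution $z = \frac{u}{(1+u)^2}$, which (as in the proof of Proposition~\ref{prop:age-exact}) turns $T$ into the rational function $T = \frac{u}{1+u}$ and should bring $\frac{\partial}{\partial v}G_r(z,v)|_{v=1}$ into a form that is amenable to Cauchy's integral formula. Applying $[z^n] = \frac{1}{2\pi i}\oint \frac{(\cdot)}{z^{n+1}}\,dz$ and changing variables (the Jacobian being $\frac{1-u}{(1+u)^3}$, exactly as before) converts the coefficient extraction into a $[u^m]$-extraction of a product of a low-degree polynomial in $u$ with a power of $(1+u)$, together with a geometric-series-type factor $\frac{1}{1 + (u/(1+u))^{2r-1}}$ analogous to the one appearing in Proposition~\ref{prop:age-exact}.

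The third step is to resolve that geometric series. Unlike the age computation, where the full sum over $r$ produced the odd-divisor function $\sigma_0^{\mathrm{odd}}$, here $r$ is fixed and—because we are differentiating only once and evaluating at $v=1$—I expect the bookkeeping to be much lighter: the structure $\frac{\partial}{\partial v}G_r|_{v=1}$ should contain the factor $T^{2r}$ (from the inner argument $\frac{zT^{2r}}{1-z(1-T^{2r})/(1-T^2)}v$) in a way that, after the substitution, shifts the exponent by exactly $2r$ and the competing $\frac{1-T^{2r}}{1-T^2}$ terms either cancel or contribute only to lower-order pieces that vanish when the dust settles. The anticipated outcome is that essentially a single binomial coefficient survives, namely $\binom{2n-2r-4}{n-2}$, plus an additive constant $1$ coming from the isolated $\innernode$ term (the $z$ summand in $S(z,t) = z + \frac{zt}{1-t-T^2}$, which contributes trivially to the size of both a tree and its ancestor).

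The main obstacle will be the algebraic simplification in the first step: differentiating the nested rational function $G_r(z,v)$ with respect to $v$, specializing at $v=1$, and then repeatedly invoking $z + T^2 = T$ to cancel the apparent poles and reduce a superficially complicated expression to one clean enough that the $z = u/(1+u)^2$ substitution yields a single binomial coefficient. I would double-check the final formula against the asymptotic expansion~\eqref{eq:reduced-size:expectation} by expanding $\frac{1}{C_{n-2}}\binom{2n-2r-4}{n-2} + 1$ via Stirling's formula, and against small cases $n = 2, 3$ where $X_{n,r}$ can be computed by hand, before declaring the proof complete.
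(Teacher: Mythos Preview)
Your plan is essentially the paper's own: compute $g(z):=\partial_v G_r(z,v)\big|_{v=1}$, substitute $z=u/(1+u)^2$ so that $T=u/(1+u)$, and read off $[z^n]g(z)$ via Cauchy's formula with the Jacobian $(1-u)/(1+u)^3$. Two small misexpectations are worth correcting, though neither is fatal.

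First, no geometric-series factor of the type $\frac{1}{1+(u/(1+u))^{2r-1}}$ survives here. That factor was specific to $F_r^{\geq}(z)$ in Proposition~\ref{prop:age-exact} because of the denominator $1+T^{2r-1}$. In the present computation the $\frac{1-T^{2r}}{1-T^2}$ pieces cancel completely once you differentiate and set $v=1$; after the substitution one obtains the very clean closed form
\[
  g(z) \;=\; \frac{u^{2r+2}}{(1-u)(1+u)^{2r+3}} \;+\; \frac{(1+2u)u}{(1+u)^3},
\]
so there is no series to ``resolve'' at all: the first summand gives a single binomial coefficient directly. This is better than your hedged prediction, not worse, so the proof goes through with less work than you feared.

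Second, your attribution of the ``$+1$'' to the isolated $z$ summand of $S(z,t)$ is off. The second term above equals $z(1+T)=S(z,z)$ in full, not just its root-only part; its $z^n$-coefficient is precisely $C_{n-2}$, and that is what produces the $+1$ after normalization. With these two corrections your outline coincides with the paper's proof.
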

\begin{proof}
  We rewrite the derivative $g(z) := \frac{\partial}{\partial v} G_{r}(z,v)|_{v=1}$ into a more suitable
  form which makes it easier to extract the coefficients. To do so, we use the
  substitution $z = u/(1+u)^{2}$ again, allowing us to express the derivative as
  \[ g(z) = \frac{u^{2r+2}}{(1-u)(1+u)^{2r+3}} + \frac{(1+2u)u}{(1+u)^{3}}. \]
  Note that as $T = \frac{u}{1+u}$, the summand $\frac{(1+2u)u}{(1+u)^{3}}$ actually
  represents $z (1 + T)$, implying that the coefficient of $z^{n}$ in this
  summand is given by $C_{n-2}$.
  Now let $\gamma$ be a small contour winding around the origin once, so that with
  Cauchy's integral formula we obtain
  \begin{align*}
    [z^{n}]g(z)
    & = \frac{1}{2\pi i} \oint_{\gamma} \frac{g(z)}{z^{n+1}}~dz =
      \frac{1}{2\pi i} \oint_{\tilde\gamma} \frac{(1+u)^{2n+2}}{u^{n+1}}
      \frac{u^{2r+2}}{(1-u)(1+u)^{2r+3}} \frac{1-u}{(1+u)^{3}}~du + C_{n-2}\\
    & = [u^{n - 2r - 2}] (1+u)^{2n-2r-4} + C_{n-2} = \binom{2n-2r-4}{n-2r-2} + C_{n-2},
  \end{align*}
  where $\tilde\gamma$ is the image of $\gamma$ under the transformation (and is still a
  small contour winding around the origin once). Dividing by $C_{n-2}$ then proves~\eqref{eq:ancestor:expectation-exact}.
\end{proof}

\newpage
\bibliographystyle{amsplainurl}
\bibliography{bib/cheub}

\providecommand{\Submitted}{Submitted} \providecommand{\availableat}{ available
  at } \providecommand{\alsoavailableat}{ also available at }
  \providecommand{\evavailableat}{earlier version available at }
  \providecommand{\toappearin}{To appear in } \providecommand{\toappear}{to
  appear} \providecommand{\inpreparation}{in preparation}
  \providecommand{\doi}[1]{\href{http://dx.doi.org/#1}{\path{doi:#1}}}
  \providecommand{\lowercaseforams}{}
  \providecommand{\etc}{\emph{etc.}}\def\cprime{$'$}
\providecommand{\bysame}{\leavevmode\hbox to3em{\hrulefill}\thinspace}
\providecommand{\MR}{\relax\ifhmode\unskip\space\fi MR }
\providecommand{\MRhref}[2]{%
  \href{http://www.ams.org/mathscinet-getitem?mr=#1}{#2}
}
\providecommand{\href}[2]{#2}
\begin{thebibliography}{10}

\bibitem{Callan:2005:136th-catalan}
David Callan, \href{https://arxiv.org/abs/math/0511010}{\emph{{The 136th
  manifestation of $C_n$}}}, arXiv:math/0511010 [math.CO], 2005.

\bibitem{Bruijn-Knuth-Rice:1972}
Nicolaas~G. de~Bruijn, Donald~E. Knuth, and Stephen~O. Rice, \emph{The average
  height of planted plane trees}, Graph theory and computing, Academic Press,
  New York, 1972, pp.~15--22. \MR{0505710 (58 \#21737)}

\bibitem{Flajolet-Odlyzko:1990:singul}
Philippe Flajolet and Andrew Odlyzko,
  \href{http://dx.doi.org/10.1137/0403019}{\emph{Singularity analysis of
  generating functions}}, SIAM J. Discrete Math. \textbf{3} (1990), 216--240.
  \MR{MR1039294 (90m:05012)}

\bibitem{Flajolet-Sedgewick:ta:analy}
Philippe Flajolet and Robert Sedgewick,
  \href{http://dx.doi.org/10.1017/CBO9780511801655}{\emph{Analytic
  combinatorics}}, Cambridge University Press, Cambridge, 2009.

\bibitem{Hackl-Heuberger-Krenn:2016:asy-sagemath}
Benjamin Hackl, Clemens Heuberger, and Daniel Krenn, \emph{Asymptotic
  expansions in {SageMath}}, \url{http://trac.sagemath.org/17601}, 2015, module
  in \href{http://www.sagemath.org/}{SageMath 6.10}.

\bibitem{Hackl-Heuberger-Kropf-Prodinger:ta:treereductions}
Benjamin Hackl, Clemens Heuberger, Sara Kropf, and Helmut Prodinger,
  \href{http://dx.doi.org/10.1007/s00010-017-0529-0}{\emph{Fringe analysis of
  plane trees related to cutting and pruning}}, Aequationes Math. (2018).

\bibitem{Hackl-Heuberger-Prodinger:2018:register-reduction}
Benjamin Hackl, Clemens Heuberger, and Helmut Prodinger,
  \href{https://doi.org/10.1016/j.tcs.2017.09.015}{\emph{Reductions of binary
  trees and lattice paths induced by the register function}}, Theoret. Comput.
  Sci. \textbf{705} (2018), 31--57. \MR{3721457}

\bibitem{Prodinger:2012:dyck-path-parity}
Helmut Prodinger, \emph{Dyck paths with parity restrictions for the final runs
  to the origin: a study of the height}, Fund. Inform. \textbf{117} (2012),
  no.~1--4, 279--285. \MR{2977924}

\bibitem{Stanley:2015:catalan}
Richard~P. Stanley,
  \href{https://books.google.at/books?id=i5QSBwAAQBAJ}{\emph{Catalan numbers}},
  Cambridge University Press, Cambridge, 2015.

\bibitem{SageMath:2017:7.6}
{The SageMath Developers}, \emph{{SageMath} {M}athematics {S}oftware ({V}ersion
  7.6)}, 2017, \url{http://www.sagemath.org}.

\end{thebibliography}

\end{document}